\newtheorem*{corollary*}{Corollary}
\newtheorem*{Result 1}{Result 1}
\newtheorem*{Result 2}{Result 2}
\newtheorem*{Result 3}{Result 3}
\newtheorem*{Result 4}{Result 4}
\newtheorem*{Case 2}{Case 2}
\newtheorem*{Case 1}{Case 1}
\newtheorem*{Theorem(Yamagata)}{Theorem(Yamagata)}
\newtheorem*{Mainlemma}{Mainlemma}
\newtheorem*{Yamagata conjecture}{Yamagata conjecture}
\newtheorem*{Nakayama conjecture}{Nakayama conjecture}
\newtheorem*{Theorem of Mueller}{Theorem of Mueller}
\newtheorem*{Minimal projective resolutions and minimal injective resolutions over Nakayama algebras}{Minimal projective resolutions and minimal injective resolutions over Nakayama algebras}
\newtheorem*{Length of the indecomposable left modules}{Length of the indecomposable left modules}
\newtheorem*{Structure of indecomposable injective modules}{Structure of indecomposable injective modules}
\newtheorem*{Problem}{Problem}
\newtheorem{theorem}{Theorem}[section]
\newtheorem*{Conjecture (Abrar)}{Conjecture (Abrar)}
\newtheorem{corollary}[theorem]{Corollary}
\newtheorem{lemma}[theorem]{Lemma}
\newtheorem{proposition}[theorem]{Proposition}
\newtheorem*{claim*}{Claim}
\theoremstyle{definition}
\newtheorem{definition}[theorem]{Definition}
\newtheorem{example}[theorem]{Example}
\theoremstyle{remark}
\numberwithin{equation}{theorem}
\renewcommand*\env@matrix[1][\
arraystretch]{%
  \edef\arraystretch{#1}%
  \hskip -\arraycolsep
  \let\@ifnextchar\new@ifnextchar
  \array{*\c@MaxMatrixCols c}}
\begin{document}

\title{Upper bounds for the dominant dimension of Nakayama and related algebras}
\date{\today}

\subjclass[2010]{Primary 16G10, 16E10}

\keywords{dominant dimension, representation theory of finite dimensional algebras, Nakayama algebras,monomial algebras}

\author{Ren\'{e} Marczinzik}
\address{Institute of algebra and number theory, University of Stuttgart, Pfaffenwaldring 57, 70569 Stuttgart, Germany}
\email{marczire@mathematik.uni-stuttgart.de}

\begin{abstract}
Optimal upper bounds are provided for the dominant dimensions of Nakayama algebras and more generally algebras $A$ with an idempotent $e$ such that there is a  minimal faithful injective-projective module $eA$ and such that $eAe$ is a Nakayama algebra. This answers a question of Abrar and proves a conjecture of Yamagata for monomial algebras.
\end{abstract}

\maketitle
\section*{Introduction}
The dominant dimension domdim($A$) of a finite dimensional algebra $A$ is defined as follows:
Let \newline \centerline{$0 \rightarrow A \rightarrow I_0 \rightarrow I_1 \rightarrow I_2 \rightarrow ...$} \newline be a minimal injective resolution of the right regular module $A$. If $I_0$ is not projective, we set domdim($A$):=0, and otherwise \newline
\centerline{domdim($A$):=$\sup \{ n | I_i $ is projective for $i=0,1,...,n \}$+1.} \newline
Note that the dominant dimension is invariant under Morita equivalence and also under field extensions (see \cite{Mue} Lemma 5). Thus we can assume from now on that all algebras are basic and split over the field unless stated otherwise. For this reason we assume throughout this article that algebras are given by quiver and relations if not stated otherwise.
One of the most famous conjectures in the representation theory of finite dimensional algebras is the Nakayama conjecture. This conjecture states that the dominant dimension of a non-selfinjective finite dimensional algebra is always finite (see \cite{Nak}). A stronger conjecture was given in \cite{Yam}, where Yamagata conjectured that the dominant dimension is bounded by a function depending on the number of simple modules of a non-selfinjective algebra.
Since the finiteness of the dominant dimension of a non-selfinjective algebra follows as a corollary of the finiteness of the finitistic dimension, the Nakayama conjecture is true for many classes of algebras. Examples include algebras with representation dimension smaller than or equal to 3 (see \cite{IgTo}). 
In contrast to that, explicit optimal bounds or values for the dominant dimension are rarely known for given classes of algebras. Here and in the following, an optimal bound denotes a bound on the dominant dimension such that the value of this bound is also attained in the given class of algebras. This leads to the following problem:

\begin{Problem}
For a given class of connected non-selfinjective algebras, find optimal bounds for the dominant dimensions.
\end{Problem}
In \cite{Abr} Theorem 1.2.3., Abrar shows that the dominant dimension of connected quiver algebras with an acyclic quiver is bound by the number of projective-injective indecomposable modules and that this bound is optimal for this class of algebras.
Recall that Nakayama algebras are defined as algebras having the property that every indecomposable projective left or right module is uniserial, see for example \cite{SkoYam} chapter I.10. for more information on Nakayama algebras.
One conjecture about the optimal bound of the dominant dimension for non-selfinjective Nakayama algebras was given by Abrar in \cite{Abr} as Conjecture 4.3.21 :
\begin{Conjecture (Abrar)}
Let $A$ be a non-selfinjective Nakayama algebra with $n \geq 3$ simple modules. Then 
$$\text{domdim}(A) \leq 2n-3.$$
\end{Conjecture (Abrar)}
In \cite{Abr}, Abrar calculated the dominant dimension for many Nakayama algebras and there the biggest value attained by a non-selfinjective Nakayama algebra with $n$ simple modules was $2n-3$, which lead him to his conjecture. Another article where the dominant dimension of a large class of Nakayama algebras is calculated is \cite{CIM}. \newline
We have four main results. The structure of the results is as follows: result 2 corrects and proves the conjecture of Abrar in a more general setting. Result 2 is a consequence of the much more general result 1, which also proves Yamagata's conjecture for the class of algebras given in result 1.
Our methods also give a bound on the finitistic dominant dimension, defined below, which is result 3.
Result 4 gives an explicit formula for the dominant dimension for Nakayama algebras that are Morita algebras in the sense of Kerner and Yamagata (see \cite{KerYam}). Result 4 is used to show that the optimal bound $2n-2$ is attained at a Nakayama algebra with $n$ simple modules.
\begin{Result 1}
(see \hyperref[best result]{ \ref*{best result}})
Let $A$ be a finite dimensional non-selfinjective algebra with dominant dimension at least 1 and minimal faithful injective-projective module $eA$. Let $s$ be the number of nonisomorphic indecomposable injective-projective
modules in mod-$A$ and assume that $eAe$ is a Nakayama algebra. Then the dominant dimension of $A$ is bounded by $2s$.
\end{Result 1}
Recall that a monomial algebra is a quiver algebra $KQ/I$ with admissible ideal $I$ generated by zero reations. Any Nakayama algebra is a monomial algebra.
We will prove that for every monomial algebra $A$ and every idempotent $e \in A$ with $eA$ being minimal faithful projective-injective, $eAe$ is a Nakayama algebra. So we can apply result 1 and we get the following (generalised) answer to the conjecture of Abrar:
\begin{Result 2}
(see \ref{Grenze2new} and \hyperref[strictstrict]{ \ref*{strictstrict}})
Let $A$ be a non-selfinjective monomial algebra with $n \geq 2$ simple modules.
Then domdim($A$) $\leq 2n-2$ and the bound is optimal and attained at a Nakayama algebra.
\end{Result 2}

We also introduce the finitistic dominant dimension fdomdim($A$) of an algebra $A$, which is defined as the supremum of all dominant dimensions of all modules having finite dominant dimension.
We prove the following for Nakayama algebras, which also gives an alternative proof of Abrar's conjecture:
\begin{Result 3}
(see \hyperref[findom]{ \ref*{findom}})
Let $A$ be a non-selfinjective Nakayama algebra with $n \geq 2$ simple modules. Then fdomdim($A) \leq 2n-2$.
\end{Result 3}
We remark that in general the finitistic dominant dimension is larger than the dominant dimension, see the example in \ref{examplefindomdim} and \cite{Mar} for more on the finitstic dominant dimension.
In the last section an explicit formula is given for the dominant dimensions of Nakayama algebras that are also Morita algebras as defined in \cite{KerYam}. In the case of Nakayama algebras that are also gendo-symmetric algebras (defined in \cite{FanKoe}) the dominant and Gorenstein dimensions have a surprising graph theoretical interpretation. This is used to construct a gendo-symmetric Nakayama algebra with $n \geq 2$ simple modules and with dominant dimension equal to $2n-2$. We give here the result for gendo-symmetric Nakayama algebras, and refer to section 3 of this paper for the general case and details.
In the following $\equiv_n$ denotes equality mod $n$.
\begin{Result 4}
(see \hyperref[formgendo]{ \ref*{formgendo}} and \hyperref[gordimgen]{ \ref*{gordimgen}})
Let $A$ be a symmetric Nakayama algebra with Loewy length $w \equiv_n 1$ and $n$ simple modules. Let
$M=\bigoplus\limits_{i=0}^{n-1}{e_i A} \oplus \bigoplus\limits_{i=1}^{r}{e_{x_i} A / e_{x_i} J^{w-1}}$ with the $x_i$ pairwise different for all $i \in \{ 1,...,r \}$. The $x_i$ in the quiver of $A$ are called special points. Then $B:=End_A(M)$ is a Nakayama algebra and the following holds:
\begin{align*}
\text{domdim}(B)&= 2\inf \{ s \geq 1 \mid \exists i,j: x_i + s  \equiv_n x_j  \}
\end{align*}
So the dominant dimension is just twice the (directed) graph theoretical minimal distance of two special points which appear in $M$.
Furthermore $B$ has Gorenstein dimension
$$2\sup \{ u_i \mid u_i=\inf \{b \geq 1 \mid \exists j:\ x_i+b \equiv_n x_j  \} \},$$
which is twice the maximal distance between two special points.
\end{Result 4}

In forthcoming work we will also give formulae to calculate the finitistic dominant dimension of Nakayama algebras that are Morita algebras. There the finitistic dominant dimension of non-selfinjective gendo-symmetric Nakayama algebras will be shown to be equal to the Gorenstein dimension of that algebra. \newline I want to thank Steffen K\"onig for proofreading and useful suggestions.

\newpage

\section{Preliminaries}
In this article all algebras are finite-dimensional $K$-algebras, for an arbitrary field $K$, and all modules are finitely generated right modules, unless stated otherwise. We will also assume that our algebras will be connected. $J$ will always denote the Jacobson radical of an algebra. 
Recall that \emph{Nakayama algebras} are defined as algebras having the property that every indecomposable projective left or right module is uniserial, see for example \cite{SkoYam} chapter I.10. for more information on Nakayama algebras.
When talking about Nakayama algebras, we assume that they are given by quivers and relations (meaning that they are basic and split algebras). As explained in the introduction, this is not really a restriction since the dominant dimension is invariant under Morita equivalence and field extensions. A Nakayama algebra with an acyclic quiver is called \emph{LNakayama algebra} (L for line) and with a cyclic quiver \emph{CNakayama algebra} (C for circle).
The quiver of a CNakayama algebra:
$$Q=\begin{xymatrix}{ &  \circ^0 \ar[r] & \circ^1 \ar[dr] &   \\
\circ^{n-1} \ar[ur] &     &     & \circ^2 \ar[d] \\
\circ^{n-2} \ar[u] &  &  & \circ^3 \ar[dl] \\
   & \circ^5 \ar @{} [ul] |{\ddots} & \circ^4 \ar[l] &  }\end{xymatrix}$$
\newline
\newline
The quiver of an LNakayama algebra:
$$Q=\begin{xymatrix}{ \circ^0 \ar[r] & \circ^1 \ar[r] & \circ^2 \ar @{} [r] |{\cdots} & \circ^{n-2} \ar[r] & \circ^{n-1}}\end{xymatrix}$$

For connected CNakayama algebras with $n$ simple modules the simple modules are numbered from 0 to $n$-1 clockwise (corresponding to $e_i A$, the projective indecomposable modules at the point $i$).
$\mathbb{Z}/n$ denotes the cyclic group of order $n$ and $l_r(i)$ the length of the projective indecomposable right module at the point $i$ (so $l_r $ is a function from $\mathbb{Z}/n$ to the natural numbers). $l_l(i)$ gives the length of the projective indecomposable left module at $i$. For more facts about Nakayama algebras see for example the chapter about serial rings in \cite{AnFul} $\S$ 32.
Recall that the lengths of the projective indecomposable modules determine the Nakayama algebra uniquely. We often denote $l_r(i)$ by $c_i$ and $l_l(i)$ by $d_i$.
In the case of a non-selfinjective CNakayama algebra, one can order the $c_i$ such that $c_{n-1}=c_0+1$ and $c_i -1 \leq c_{i+1}$ for $0 \leq i \leq n-2$ and then $(c_0 , c_1 ,..., c_{n-1})$ is called the \emph{Kupisch series} of the Nakayama algebra.
A Nakayama algebra $A$ is selfinjective iff the $c_i=l_r(i)$ are all equal and the quiver of $A$ is a circle. Every indecomposable module of a Nakayama algebra is uniserial, which means that the chain of submodules of an indecomposable module coincides with its radical series. Thus one can write every indecomposable module of a Nakayama algebra as a quotient of an indecomposable projective module $P$ by a radical power of $P$. 
Two Nakayama algebras $A$ (with Kupisch series $(c_0,c_1,...,c_{n-1})$) and $B$ (with Kupisch series $(C_0,C_1,...,C_{m-1})$) are said to be in the same \textit{difference class}, if $n=m$ and $c_i \equiv_n C_i$ for all $i=0,1,...,n-1$. Given a Nakayama algebra with $n$ simple modules, the largest number of the $c_i$ minus the smallest number is less than $n$. Therefore there are only finitely many difference classes of Nakayama algebras with a fixed number of simple modules. $D:=Hom_K(-,K)$ denotes the $K$-duality of an algebra $A$ over the field $K$.
We denote by $S_i=e_iA/e_iJ$, $P_i=e_i A$ and $I_i=D(Ae_i)$  the simple, indecomposable projective and indecomposable injective module, respectively, at the point $i$. \newline
The \emph{dominant dimension} domdim($M$) of a module $M$ is defined as follows: Let \newline
\centerline{$0 \rightarrow M \rightarrow I_0 \rightarrow I_1 \rightarrow I_2 \rightarrow ... $} \newline
be a minimal injective coresolution of $M$. If $I_0$ is not projective, then we set domdim$(M):=0$ and otherwise \newline
\centerline{domdim($M$):=$\sup \{ n | I_i $ is projective for $i=0,1,...,n \}$+1.}
The \emph{codominant dimension} of a module $M$ is defined as the dominant dimension of the dual module $D(M)$. The dominant dimension of a finite dimensional algebra is defined as the dominant dimension of the regular module. 
So domdim($A$)$ \geq 1$ means that the injective hull of the regular module $A$ is projective. In case of domdim($A$) $ \geq 1$, there exists an idempotent $e$ such that $eA$ is a minimal faithful projective-injective module which is just the direct sum of all distinct indecomposable projective-injective modules.
Algebras with dominant dimension larger than or equal to 1 are called \emph{QF-3 algebras}.
All Nakayama algebras are QF-3 algebras (see \cite{Abr}, Proposition 4.2.2 and Propositon 4.3.3).
The Morita-Tachikawa correspondence says that an algebra $A$ has dominant dimension at least two iff $A$ is isomorphic to an algebra of the form $End_B(M)$, where $B$ is some algebra with generator-cogenerator $M$. In this case $B \cong eAe$, where $e$ is an idempotent such that $eA$ is a minimal faithful projective-injective right module. See for example \cite{Ta} for more details and proofs of the Morita-Tachikawa correspondence.
For more information on dominant dimensions and QF-3 algebras, we refer to \cite{Ta}.
The \emph{Gorenstein dimension} of an algebra $A$ is defined as the injective dimension of the right regular module and $A$ is called a \emph{Gorenstein algebra} in case the injective dimension of the right regular module is finite and coincides with the injective dimension of the left regular module. It is an open conjecture wheter the injective dimension of the left regular module always coincides with the injective dimension of the right regular module. This conjecture, called the Gorenstein symmetry conjecture, is a consequence of the famous finitistic dimension conjecture, which is true for representation-finite algebras, see for example \cite{ARS} in the conjectures section. Since we deal with Nakayama algebras (which are always representation-finite) in this article, we do not have to worry about the Gorenstein symmetry conjecture here and it is thus enough to calculate the right injective dimension of the regular module.
By an acyclic algebra we denote quiver algebras whose quiver is acyclic.

\section{Nakayama algebras}
In this article we prove, besides other things, that the dominant dimension of a non-selfinjective Nakayama algebra $A$ is bounded by $2s$, where $s$ is the number of nonisomorphic projective-injective indecomposable modules of $A$. Later we will provide examples which show that the number $2s$ is attained by some Nakayama algebras with $s$ nonisomorphic projective-injective indecomposable modules. Then we can correct and prove a sharpened version of a conjecture of Abrar, who conjectured that the dominant dimension of a Nakayama algebra with $n$ simple modules is bounded by $2n-3$ (see \cite{Abr}). We will in fact show that the correct bound is $2n-2$ and this value is attained in an example.

\subsection{Resolutions for Nakayama algebras} \label{minpro}
In this subsection results about Nakayama algebras will be collected.
$A$ will always be a Nakayama algebra  with $n$ simple modules and indices of primitive idempotents are integers modulo $n$.

Let $M:=e_iA/e_iJ^{k}$ be an indecomposable module of $A$. The projective cover of $M$ is obviously $e_i A$ and $\Omega(M)=e_i J^k$. Then $\Omega(e_i J^{k})=e_{i+k} J^{l_r(i)-k}$, since
\begin{align*}
top(e_i J^{k})&=e_i J^{k}/e_i J^{k+1} \cong S_{i+k},\\
\text{dim}(e_i J^{k})&=l_r(i)-k\ \text{and}\\
\text{dim}(e_{i+k} J^{l_r(i)-k})&=l_r(i+k)-(l_r(i)-k),
\end{align*}
which determines $\Omega(e_i J^{k})$ uniquely. To see this, recall that the submodules of $e_{i+k} A$ form a chain and dim($e_i J^{k}$)+dim $\Omega(e_i J^{k})$=dim($e_{i+k} A$)=$l_r(i+k)$. This motivates the following definition:
\begin{definition}
Define a function $f: \mathbb{Z}/n \rightarrow \mathbb{Z}/n$ as $f(x):=x+l_r(x)$.
\end{definition}
We note that the use of such a function is due to \cite{Gus}. The procedure to calculate syzygies succesively gives the following proposition, see also \cite{Gus}.
\begin{proposition}\label{minpro2}
The minimal projective resolution of $M=e_iA/e_iJ^{k}$ has the following form by repeating the above process ($f^e $ denotes the function $f$ taken $e$ times for a natural number $e \geq 0$): \newline
\begin{tiny}
\xymatrix{
\cdots \ar[r] & e_{f^r(j)} A  \ar[r] & e_{f^r(i)}A \ar[r] & \cdots \ar[r] & e_{f^2(j)} A                \ar@{->} `r/8pt[d] `/10pt[l] `^dl[llll]|{} `^r/3pt[dll] [dllll] \\
e_{f^2(i)} A\ar[r] & e_{f^1(j)} A \ar[r] & e_{f^1(i)} A \ar[r] & e_j A \ar[r] & e_i A \ar[r] & M \ar[r] & 0}
\end{tiny}
\end{proposition}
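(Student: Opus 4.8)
The plan is to prove the proposition by induction on homological degree, feeding the syzygy computation displayed above back into itself. The only ingredient I need is the general form of that computation: if $N = e_a J^m$ with $0 < m < l_r(a)$, then $N$ is uniserial with $\top(N) = S_{a+m}$, so its projective cover is $e_{a+m}A$, and a dimension count inside the uniserial module $e_{a+m}A$ identifies $\Omega(N)$ with its unique submodule of dimension $l_r(a+m)-(l_r(a)-m)$, that is $\Omega(N) = e_{a+m}J^{\,l_r(a)-m}$. In particular $\top(\Omega(N)) = S_{(a+m)+(l_r(a)-m)} = S_{a+l_r(a)} = S_{f(a)}$.

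First I would run the recursion. Since $M$ is indecomposable and non-projective, $0 < k < l_r(i)$ and $\Omega(M) = e_i J^k$; being the kernel of a projective cover, every further syzygy is a submodule of an indecomposable projective, hence again uniserial, $\Omega^t(M) = e_{a_t}J^{c_t}$ with $0 < c_t < l_r(a_t)$ (up to the first $t$ where $\Omega^t(M)$ is projective, at which point the resolution stops one step later). Writing $b_t := a_t + c_t$ for the index with $\top(\Omega^t(M)) = S_{b_t}$, the term in homological degree $t \ge 1$ of the minimal projective resolution is the projective cover $P_t = e_{b_t}A$ of $\Omega^t(M)$; moreover $a_{t+1} = b_t$ (a syzygy starts at the top index of the previous one), and the displayed identity gives $b_{t+1} = f(a_t)$. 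Starting from $a_1 = i$, $b_1 = i+k =: j$, this produces $b_2 = f(i)$ and $b_{t+1} = f(b_{t-1})$ for $t \ge 2$, so by an immediate induction $b_{2e} = f^{e}(i)$ for $e \ge 1$ and $b_{2e+1} = f^{e}(j)$ for $e \ge 0$. Together with $P_0 = e_iA$ this is exactly the diagram in the statement, and the resolution is minimal because every $P_t$ was chosen as a projective cover, so every differential has image inside the radical of its target.

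I do not anticipate a serious obstacle; the argument is a direct unwinding of the syzygy rule. The one point worth care is the degenerate case in which some syzygy $\Omega^t(M) = e_{a_t}J^{c_t}$ is itself projective, equivalently $l_r(a_t) - c_t = l_r(a_t + c_t)$, equivalently $\Omega^{t+1}(M) = 0$: then the projective resolution is finite and the diagram is to be read as terminating at that term. This is consistent with the statement ``by repeating the above process'' and changes nothing about the shape of the terms that actually appear.
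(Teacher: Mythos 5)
Your argument is correct and is essentially the paper's own: the paper simply iterates the syzygy rule $\Omega(e_xJ^y)=e_{x+y}J^{c_x-y}$ established just before the proposition, and your bookkeeping with $a_t,b_t,c_t$ (giving $a_{t+1}=b_t$, $b_{t+1}=f(a_t)$, hence $b_{2e}=f^e(i)$, $b_{2e+1}=f^e(j)$ with $j=i+k$) is exactly that recursion written out, with minimality guaranteed by taking projective covers at each step. Your remark on the terminating case ($\Omega^{t+1}(M)=0$) is a fine and harmless addition.
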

If we denote $e_x J^y$ for short by $(x,y) \in \mathbb{Z}/n \times \mathbb{N}$, then $\Omega(e_x J^y)=(x+y,c_x-y)$. Like this we can calculate the syzygies successively with a simple formula depending only on the Kupisch series of the Nakayama algebra.
Dually, we get a minimal injective coresolution (with $k=c_i$, if $M$ is projective):
We have soc($M$)=$S_{i+k-1}$ (the simple module corresponding to the point $i+k-1$). Therefore, the injective hull of $M$ is $D(Ae_{i+k-1})$ and we get $\Omega^{-1}(M)=D(J^k e_{i+k-1})$ and $\Omega^{-1}(D(J^k e_{i+k-1}))=D(J^{l_l(i+k-1)-k} e_{i-1})$, again by comparing dimensions and using that submodules form a chain.
\begin{definition}
Define $g:\mathbb{Z}/n \rightarrow \mathbb{Z}/n$ as $g(x):=x-l_l(x)$.
\end{definition}

\begin{proposition} \label{minpro3}
The minimal injective coresolution of $M$ looks like this by repeating the above process: \newline
\begin{center}
\begin{tiny}
\xymatrix@C=0.5cm@R=1cm{
0 \ar[r] & M  \ar[r] & D(Ae_{j-1}) \ar[r] & D(Ae_{i-1}) \ar[r] & D(Ae_{g(j-1)}) \ar[r] & D(Ae_{g(i-1)})  \ar@{->} `r/8pt[d] `/10pt[l] `^dl[lllll]|{} `^r/3pt[dll] [dlllll] \\
D(Ae_{g^2(j-1)}) \ar[r] & D(Ae_{g^2(i-1)}) \ar[r] & \cdots \ar[r] & D(Ae_{g^e(j-1)})  \ar[r] & D(Ae_{g^e(i-1)})  \ar[r] & \cdots}
\end{tiny}
\end{center}
\end{proposition}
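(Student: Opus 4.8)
The plan is to prove this as the formal dual of Proposition~\ref{minpro2}, in fact simply by continuing the computation carried out just before the statement. Conceptually, applying the $K$-duality $D=\Hom_K(-,K)$ turns a minimal injective coresolution of a right $A$-module into a minimal projective resolution of the corresponding right $A^{\mathrm{op}}$-module; since $A$ is a Nakayama algebra so is $A^{\mathrm{op}}$, and $D(e_iA/e_iJ^{k})$ is again uniserial, hence a quotient of an indecomposable projective $A^{\mathrm{op}}$-module by a radical power. Thus Proposition~\ref{minpro2} applies on the $A^{\mathrm{op}}$-side and one transports the answer back. I would not push this translation too far in the write-up, though: reversing the arrows also reverses the cyclic order of the vertices, so matching the map $x\mapsto x+l_r(x)$ of \ref{minpro2} for $A^{\mathrm{op}}$ with the map $g(x)=x-l_l(x)$ for $A$ needs a relabelling that simultaneously permutes the length function. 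It is cleaner to argue directly.

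For the direct argument I would introduce, in analogy with the shorthand $(x,y)$ for $e_xJ^{y}$ used after Proposition~\ref{minpro2}, the shorthand $[x,y]:=D(J^{y}e_x)$, the indecomposable right module obtained by dualising the submodule $J^{y}e_x$ of the uniserial left module $Ae_x$. The key step is the exact dual of the syzygy rule $\Omega(e_xJ^{y})=(x+y,c_x-y)$ recorded there: since the submodules of $Ae_x$ (equivalently of the injective $I_x=D(Ae_x)$) form a chain, a dimension count shows $\soc([x,y])=S_{x-y}$, so the injective hull of $[x,y]$ is $I_{x-y}=D(Ae_{x-y})$, and the cokernel of $[x,y]\hookrightarrow D(Ae_{x-y})$ --- being a quotient of $D(Ae_{x-y})$, hence again of the form $[\,\cdot\,,\,\cdot\,]$ --- has $K$-dimension $l_l(x-y)-(l_l(x)-y)$, forcing $\Omega^{-1}([x,y])=[\,x-y,\;l_l(x)-y\,]$. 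This, together with the formula for injective hulls, is all that the ``above process'' amounts to.

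With this local rule the whole coresolution is obtained by induction. Put $j:=i+k$, so that $\soc M=S_{j-1}$, the injective hull of $M$ is $D(Ae_{j-1})$, and $\Omega^{-1}(M)=[\,j-1,\,k\,]$; these are the first two terms of the displayed complex. The inductive claim is that for $e\ge 1$ one has $\Omega^{-(2e-1)}(M)=[\,g^{e-1}(j-1),\,\ast\,]$ and $\Omega^{-2e}(M)=[\,g^{e-1}(i-1),\,\ast\,]$, with the radical powers $\ast$ determined by the dimension formulas; the inductive step is the observation that applying $[x,y]\mapsto[x-y,\,l_l(x)-y]$ twice sends the first coordinate $x$ to $x-l_l(x)=g(x)$. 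Combining this with ``the injective hull of $[x,y]$ is $D(Ae_{x-y})$'' yields exactly the alternating list $D(Ae_{g^{e}(j-1)})$, $D(Ae_{g^{e}(i-1)})$, $e\ge 0$, of the proposition, the complex terminating at the first $e$ for which the relevant submodule of $Ae_{(\cdot)}$ vanishes; minimality is automatic since injective hulls were taken at every stage.

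The one place where care is genuinely needed is the index bookkeeping modulo $n$. Unlike Proposition~\ref{minpro2}, whose syzygy step \emph{adds} $l_r(x)$, the cosyzygy step here \emph{subtracts} $l_l(x)$, so the coresolution is not literally the transpose of the projective resolution: the various shifts by $-1$ --- coming from ``socle versus top'', i.e.\ from $I_m=D(Ae_m)$ having socle $S_m$ whereas $e_mA$ has top $S_m$ --- must be carried along carefully so that two applications of the local rule compose to the clean shift by $g$. Verifying that composition honestly is the only real computation in the proof; granted it, Proposition~\ref{minpro3} becomes the formal mirror of the already proved Proposition~\ref{minpro2}.
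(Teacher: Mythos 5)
Your direct argument is exactly the paper's: it computes $\soc([x,y])=S_{x-y}$, takes the injective hull $D(Ae_{x-y})$, and identifies the cosyzygy $\Omega^{-1}([x,y])=[x-y,\,d_x-y]$ by comparing dimensions and using that submodules of a uniserial module form a chain, then iterates (the paper records this rule right before and after the proposition and says ``by repeating the above process''). Your extra bookkeeping, checking that two applications of the rule shift the first coordinate by $g$ and produce the alternating terms $D(Ae_{g^{e}(j-1)})$, $D(Ae_{g^{e}(i-1)})$ with $j=i+k$, is correct and just makes explicit what the paper leaves implicit.
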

If we denote $D(J^y e_x)$ for short by $[x,y] \in \mathbb{Z}/n \times \mathbb{N}$ then we get that $\Omega^{-1}(D(J^y e_x))=[x-y,d_x-y]$. Like this we can calculate the cosyzygies successively.

Now we specialize to selfinjective Nakayama algebras with Loewy length $k$.
We give the minimal projective resolution of a general indecomposable nonprojective module $M$ and a formula for $Ext^{i}(M,M)$ for arbitrary $i \geq 1$.
Without loss of generality, we can assume that $M=e_0 J^s$, for $1 \leq s \leq k-1$.
The minimal projective resolution of $M$ then looks like this: \newline
\begin{tiny}
\xymatrix@C=1.5cm@R=1cm{
\cdots \ar[r] & e_{(i+1)k+s}A \ar[r]^{L_{(i+1)k,s}} & e_{(i+1)k}A \ar[r]^{L_{ik+s,k-s}} & e_{ik+s} A \ar[r]^{L_{ik,s}} & e_{ik}A \ar@{->} `r/8pt[d] `/10pt[l] `^dl[llll]|{} `^r/3pt[dll] [dllll] \\
\cdots \ar[r] & e_k A \ar[r]^{L_{s,k-s}} & e_s A \ar[r]^{L_{0,s}} &  M \ar[r] & 0.}
\end{tiny}
\newline 
Here, we denote by $L_{x,y}$ the left multiplication by $w_{x,y}$, where $w_{x,y}$ is the unique path starting at $x$ and having length $y$.
If we apply the functor $Hom(-,M)$ to this minimal projective resolution (with $M$ deleted), then we get the complex: \newline
\begin{tiny}
\xymatrix@C=1.5cm@R=1cm{
0 \ar[r] & e_0 J^s e_s \ar[r]^{R_{s,k-s}} & e_0 J^s e_k \ar[r]^{R_{k,s}} & e_0 J^s e_{k+s} \ar[r]^{} & \cdots \ar@{->} `r/8pt[d] `/10pt[l] `^dl[llll]|{} `^r/3pt[dll] [dllll] \\
e_0 J^s e_{ik} \ar[r]^{R_{ik,s}} & e_0 J^s e_{ik+s} \ar[r]^{R_{ik+s,k-s}} & e_0 J^s e_{(i+1)k} \ar[r]^{R_{(i+1)k,s}} & e_0 J^s e_{(i+1)k+s}  \ar[r] & \cdots}
\end{tiny}
\newline
Here, $R_{x,y}$ is right multiplication by $w_{x,y}$.
We see that $R_{ik+s,k-s}=0$ for all $i \geq 1$, since we map paths of length at least $s$ to paths of length at least $k$ (and $J^k =0$).
Therefore, we have for all $i \geq 1$: \newline
$Ext^{2i-1}(M,M)=ker(R_{ik,s}) \neq 0$, iff there is a path of length larger than or equal to $k-s$ in $e_0 J^s e_{ik}$ and 
$Ext^{2i}(M,M)=e_0 J^s e_{ik+s} / Im(R_{ik,s})$.

\begin{Length of the indecomposable left modules}
\label{Length of the indecomposable left modules}
The length of the indecomposable projective left module $Ae_{x}$ at a vertex $x$ (and, therefore, the length of the indecomposable injective right module at $x$) satisfies:
$$l_l(x)=\inf \{k | k \geq l_r(x-k) \}.$$
The values of $l_l(x)$ are a permutation of the values of $l_r(x)$ and are determined uniquely by the lengths of the projective indecomposable right modules.
\end{Length of the indecomposable left modules}

\begin{proof}
See \cite{Ful} Theorem 2.2.
\end{proof}

\begin{Structure of indecomposable injective modules}
\label{Structure of indecomposable injective modules}
Let $M:= e_iA/e_iJ^m$ be an indecomposable module of the Nakayama algebra $A$ with $m=dim(M) \leq c_i$.
Then $M$ is injective iff $c_{i-1} \leq m$.
\end{Structure of indecomposable injective modules}

\begin{proof}
See \cite{AnFul} Theorem 32.6.
\end{proof}

The following theorem shows that the dominant dimension of a given Nakayama algebra depends only on its difference class:
\begin{theorem}
Let $A$ be a Nakayama algebra with $n$ simple modules and $M=e_iA/e_iJ^k$ be an $A$-module.
The dominant dimension of $M$ depends only on the difference class of $A$ and on the $i$ mod $n$ and $k$ mod $n$.
Especially, the dominant dimension of $A$ depends only on the difference class of $A$.
\end{theorem}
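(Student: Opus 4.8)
The plan is to reduce everything to the explicit combinatorics of Propositions \ref{minpro2} and \ref{minpro3}. Every indecomposable $A$-module has the form $e_xA/e_xJ^y$, the minimal injective coresolution of $M=e_iA/e_iJ^k$ is governed by the cosyzygy rule $\Omega^{-1}([x,y])=[x-y,\,d_x-y]$ started from $\Omega^{-1}(M)=[i+k-1,k]$, and every module occurring in that coresolution is an indecomposable injective $D(Ae_\ell)$. Hence $\operatorname{domdim}(M)$ is determined by three data: (i) the socle vertices $\ell_0,\ell_1,\ell_2,\dots$ of the injectives $I_t=D(Ae_{\ell_t})$, with $\ell_0=i+k-1$; (ii) for each $t$, whether $D(Ae_{\ell_t})$ is projective; and (iii) whether and where the coresolution terminates, i.e.\ at which step a cosyzygy becomes injective. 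I would prove the theorem by showing that each of (i)--(iii) depends only on the difference class of $A$ and on $i$ and $k$ modulo $n$.

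The first step is an arithmetic observation: after fixing the standard normalisation of the Kupisch series, a difference class is a single shift class. Indeed, if $(c_i)$ and $(C_i)$ are both normalised Kupisch series with $c_i\equiv_n C_i$, then $C_i-c_i$ is forced to be a constant by the Kupisch inequalities together with the range being $<n$. So it suffices to compare a Nakayama algebra $A$ with Kupisch series $(c_0,\dots,c_{n-1})$ to the algebra $B$ with Kupisch series $(c_0+n,\dots,c_{n-1}+n)$, matching $e_iA/e_iJ^k$ with $e_iB/e_iJ^{k+n}$ so that projectives correspond to projectives. From $l_l(x)=\inf\{m\mid m\ge l_r(x-m)\}$ one reads off that passing to $B$ replaces $(d_x)=(l_l(x))$ by $(d_x+n)$, whence $f(x)=x+l_r(x)$ and $g(x)=x-l_l(x)$ are unchanged modulo $n$ within the difference class. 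Since Proposition \ref{minpro3} gives $\ell_{2m}\equiv g^m(i+k-1)$ and $\ell_{2m+1}\equiv g^m(i-1)$ modulo $n$, this settles (i).

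For (ii), $D(Ae_\ell)$ is the injective envelope of $S_\ell$, so it is projective exactly when there is an indecomposable projective--injective module with socle $S_\ell$. By the structure theorem for injective modules, $e_mA$ is injective iff $c_{m-1}\le c_m$, and its socle is $S_{m+c_m-1}=S_{f(m)-1}$; hence $D(Ae_\ell)$ is projective iff there is an $m$ with $f(m)\equiv_n \ell+1$ and $c_{m-1}\le c_m$. The congruence involves only $\ell$ and the residues $c_i\bmod n$, and the inequality $c_{m-1}\le c_m$ is unaffected by adding $n$ to the whole Kupisch series; so (ii) depends only on $\ell\bmod n$ and the difference class. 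Together with (i), this already pins down the pattern of projective versus non-projective injectives appearing in the coresolution, hence all finite values of $\operatorname{domdim}(M)$ up to the point where the coresolution could stop.

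The remaining and, I expect, most delicate point is (iii), where the second coordinate in $[x,y]$ is a genuine natural number rather than a residue: the coresolution stops precisely at a cosyzygy $[x,y]$ with $y=d_x$. Writing $\Omega^{-1}(M)=[a_1,b_1]=[i+k-1,k]$ and $(a_{t+1},b_{t+1})=(a_t-b_t,\,d_{a_t}-b_t)$, I would show that under the passage from $A$ to $B$ above --- in particular after the replacement $k\mapsto k+n$ in the starting term --- the numbers $b_t$ change only by $0$ or $n$ (with a parity pattern in $t$) while $a_t$ changes by a multiple of $n$, so that, $d$ having shifted by $n$ as well, the equality $b_t=d_{a_t}$ is preserved step by step; hence the coresolution terminates at the same index, and the infinite case (where it never terminates) occurs for $A$ exactly when it does for $B$. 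Making this compatible bookkeeping precise, and in particular checking that the very first cosyzygy $[i+k-1,k]$ --- the one place where $k$ genuinely enters --- transforms as claimed, is the technical heart of the argument. Assembling (i)--(iii) then shows that $\operatorname{domdim}(M)$ is a function of the difference class and of $i$ and $k$ modulo $n$; specialising to $M=e_iA$, where $k=c_i$ is itself part of the difference-class data, yields the statement for $\operatorname{domdim}(A)$.
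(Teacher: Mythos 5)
Your (i) and (ii) are exactly the data the paper works with (the cosyzygy rule $\Omega^{-1}[x,y]=[x-y,\,d_x-y]$ and the projectivity pattern of the injective terms), but your point (iii) is not a gap you could close with more careful bookkeeping: the statement you plan to prove there is false, even in your reduced situation of a constant shift of the Kupisch series by $n$. The paper's own example after this theorem shows it. Take the Kupisch series $(2,2,3)$ and $(5,5,6)$ (so $n=3$, a constant shift by $3$) and the matched modules $e_0A=e_0A/e_0J^{2}$ and $e_0B=e_0B/e_0J^{5}$. One has $(d_0,d_1,d_2)=(2,3,2)$ resp.\ $(5,6,5)$, and the cosyzygies are $[1,2],[2,1],[1,1],[0,2]$ resp.\ $[1,5],[2,1],[1,4],[0,2],\dots$. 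Over $(2,2,3)$ the fourth cosyzygy $[0,2]=D(J^2e_0)$ is zero and the coresolution stops; over $(5,5,6)$ it has dimension $3$ and the coresolution becomes periodic, hence never stops. The mechanism is precisely where your parity bookkeeping breaks: vanishing of $[a_t,b_t]$ means $b_t\ge d_{a_t}$, and under the shift every $d_x$ grows by $n$ while $b_t$ grows by $n$ only at every other step, so the equality $b_t=d_{a_t}$ is not preserved. (Your preliminary reduction is also unjustified: $(2,4,3)$ and $(5,4,6)$ both satisfy the paper's normalisation, are entrywise congruent mod $3$, and differ by $(3,0,3)$, so a difference class is not forced to be a single shift class; but this is secondary, since (iii) already fails for constant shifts.)

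The missing idea is that termination need not be invariant at all. The paper argues: if $\Omega^{-j}(M)=0$ occurs for the first time at some $j\ge 1$, then among $I_0,\dots,I_{j-1}$ there must already be a non-projective term, since otherwise the minimal injective coresolution would end in a surjection $I_{j-2}\rightarrow I_{j-1}$ between projective modules, which would split and contradict minimality. Consequently domdim($M$) is decided by the socle vertices and the projective/non-projective pattern of the terms occurring strictly before the first vanishing cosyzygy, i.e.\ by your data (i) and (ii) alone, and the question of where, or whether, the coresolution stops never enters (in the example above both algebras give domdim $3$, even though one coresolution is finite and the other infinite). If you replace your point (iii) by this minimality argument, your proof closes and coincides with the paper's; as written, the "technical heart" you defer cannot be carried out.
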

\begin{proof}
We may assume that $M$ is not injective.
First we see that in a given difference class of Nakayama algebras, $e_i A$ is injective iff $c_{i-1} \leq c_i$, so the position of the injective-projective modules doesn't depend on the choice of $A$ inside a given difference class.
In order to determine the dominant dimension of $M$, we calculate a minimal injective coresolution $(I_i)$ and the cosyzygies of $M$.
Note that $\Omega^{-1}(M)=D(J^k e_{i+k-1})$ and that calculating syzygies of modules of the form $[x,y]=D(J^ye_x)$ is done by $\Omega^{-1}[x,y]=[x-y,d_x-y]$. If $\Omega^{-j}(M)=D(J^pe_q)$, then $I_{j-1} \cong D(Ae_q)$. We see that all those calculations only depend on $i$, $k$ mod $n$ and the difference class (which determines the $d_i$ mod $n$) of the algebra.
Now there are two cases to consider: \newline 
\underline{Case 1}: $\Omega^{-j}(M) \neq 0$ for every $j \geq 1$. Note that the simple socles of the $I_i$ do not depend on the difference class and $i$ mod $n$ and $k$ mod $n$ as explained before. Thus the calculation of the dominant dimension of $M$ is also independent of the difference class and $i$ mod $n$ and $k$ mod $n$ . \newline
\underline{Case 2:} Assume now that $\Omega^{-j}(M)=0$ in one algebra of a given difference class, but $\Omega^{-j}(M)\neq 0$ in another algebra in the given difference class for a module $M$ of the form $e_iA/e_iJ^k$, for some $j\geq 1$. When $\Omega^{-j}(M)=0$ happens for some $j \geq 1$ for the first time, there must have been an $I_l$ with $l \leq j-1$, which is not projective. Otherwise we would have a minimal injective coresolution $(I_i)$, with the properties that all terms are also projective and that its ending has the following form:
$$\cdots \rightarrow I_{j-2} \stackrel{f}{\rightarrow} I_{j-1} \rightarrow 0.$$
Therefore, the surjective map $f$ between projective modules would be split, contradicting the minimality of the resolution.
So calculating the dominant dimension of $M$ involves only those terms $I_l$ for $1 \leq l \leq j-1$ in the minimal injective coresolution of $M$ until $\Omega^{-j}(M) = 0$ happens for the first time. But those terms in the injective coresolution depend only on the difference class of $A$ and $i$ mod $n$ and $k$ mod $n$ and so does the dominant dimension of $M$. 
\ \ \textcolor{white}{.}

\end{proof}

\begin{example}
We calculate the dominant dimension of a Nakayama algebra $A$ in the difference class of Nakayama algebras with Kupisch series $(c_0,c_1,c_2)=(3k+2,3k+2,3k+3)$, for $k \geq 0$.
First we calculate the dimension of the injective indecomposable modules:
\centerline{$l_l(0)= \inf \{ s \geq 3k+2 \mid s \geq l_r(-s) \} = 3k+2$ and likewise $l_l(1)=3k+3$ and $l_l(2)=3k+2$.}
Thus $(d_0,d_1,d_2)=(3k+2,3k+3,3k+2)$.
With soc($e_1A$)=$e_1J^{3k+1} \cong S_2$, it follows that $e_1 A$ embeds into $D(Ae_2)$. But, since $e_1 A$ and $D(Ae_2)$ have the same dimension, they are isomorphic. \newline
With soc($e_2 A$) = $e_2J^{3k+2} \cong S_1 $, it follows that $e_2 A$ embeds into $D(Ae_1)$ and as above both are isomorphic, because they have the same dimension.
Thus the projective-injective indecomposable modules are $e_1A \cong D(Ae_2)$ and $e_2A \cong D(Ae_1)$.
Now it is enough to look at an injective coresolution of $e_0A$.
Since soc($e_0 A$) = $e_0 J^{3k+1} \cong S_1$, $e_0A$ embeds into $D(A e_1)$ with cokernel equal to $D(J^{3k+2} e_1)=[1,3k+2]$. Then $\Omega^{-1}([1,3k+2])=[1-(3k+2),d_1-(3k+2)]=[2,1]$ and $\Omega^{-1}([2,1])=[2-1,d_2-1]=[1,3k+1]$ and $\Omega^{-1}([1,3k+1])=(1-(3k+1),d_1-(3k+1))=[0,2]$.
The minimal injective coresolution of $e_0 A$ starts as follows:
$$0 \rightarrow e_0 A \rightarrow D(Ae_1) \rightarrow D(Ae_2) \rightarrow D(Ae_1) \rightarrow D(Ae_0) \rightarrow \cdots .$$
Since $D(Ae_0)$ is not projective, the dominant dimension of $e_0 A$ is equal to 3, as is the dominant dimension of $A$, since $e_0 A$ is the only indecomposable projective and not injective module.
Note that if $A$ has Kupisch series $(2,2,3)$, then $D(J^2 e_0)=[0,2]$=0, while for $k \geq 1$, that module is nonzero.
Also note that the Gorenstein dimension is not independent of the difference class of the Nakayama algebra:
If $A$ has Kupisch series $(2,2,3)$, then, by the above, the Gorenstein dimension is equal to the dominant dimension and finite.
But, if $A$ has Kupisch series $(3k+2,3k+2,3k+3)$ for a $k \geq 1$, then continuing as above, one gets: $\Omega^{-1}([0,2])=[1,3k],\ \Omega^{-1}([1,3k])=[1,3],\ \Omega^{-1}([1,3])=[1,3k+2]=\Omega^{-1}(e_0A)$, and the resolution gets periodic and is, therefore, infinite.
\end{example}

\subsection{Gorenstein-projective modules}
In this section, $A$ denotes a finite dimensional algebra.
See \cite{Che} Section 2, for an elementary introduction to Gorenstein homological algebra.
We take our definitions and lemmas from this source.
\begin{definition}
A complex $P^{\bullet} : ... \rightarrow P^{n-1} \stackrel{d^{n-1}}{\rightarrow} P^n \stackrel{d^{n}}{\rightarrow} P^{n+1} \rightarrow ...$ of projective $A$-modules is called totally acyclic, if it is exact and the complex $Hom( P^{\bullet} ,A)$ is also exact.
An $A$-module $M$ is called Gorenstein-projective, if there is a totally acyclic complex of projective modules such that $M=ker(d^0)$. We denote by $A$-gproj the full subcategory of mod-$A$ of Gorenstein-projective modules and we denote by $^{\perp}A$ the full subcategory of mod-$A$ of all modules $N$ with $Ext^{i}(N,A)=0$, for all $i \geq 1$. ${D(A)}^{\perp}$ denotes the full subcategory of mod-$A$ of all modules $N$ with $Ext^{i}(D(A),N)=0$ for all $i \geq 1$.
\end{definition}

\begin{lemma}
(see \cite{Che} Corollary 2.1.9. and 2.2.17.) \newline
Let $A$ be a finite dimensional algebra and $M$ an $A$-module. 
\begin{enumerate}
\item $A$-gproj $\subseteq \, ^{\perp}A$.  
\item An $A$-module $N$ is in $A$-gproj, in case there is an $n$, such that $Ext^{i}(N,A) =0$, for all $i=1,...,n$, and $\Omega^{n}(N)=N$. 
\item If $Ext^{i}(N,A)=0$ for all $i=1,...,d$ and $\Omega^{d}(N)$ is Gorenstein-projective, then also $N$ is Gorenstein-projective.
\end{enumerate}
\end{lemma}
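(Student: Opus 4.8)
For Part~(1), I would fix a totally acyclic complex $P^{\bullet}\colon\cdots\to P^{-1}\xrightarrow{d^{-1}}P^{0}\xrightarrow{d^{0}}P^{1}\to\cdots$ of projectives with $M=\ker d^{0}$, note that its non-positive part $\cdots\to P^{-2}\to P^{-1}\to M\to 0$ is a projective resolution of $M$, and compute $\Ext^{i}(M,A)$ from it. The resulting cohomology groups coincide with the cohomology groups of $\Hom(P^{\bullet},A)$ at the spots $\Hom(P^{-i},A)$ for $i\ge 1$, and these vanish because $\Hom(P^{\bullet},A)$ is exact. Hence $\Ext^{i}(M,A)=0$ for all $i\ge 1$, that is, $M\in{}^{\perp}A$. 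This part is routine.

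For Part~(2), I would take a minimal projective resolution $\cdots\to P_{1}\to P_{0}\to N\to 0$ and use the isomorphism $\Omega^{n}(N)\cong N$ to obtain an exact sequence $E\colon 0\to N\to P_{n-1}\to P_{n-2}\to\cdots\to P_{0}\to N\to 0$ whose first map is $N\cong\Omega^{n}(N)\hookrightarrow P_{n-1}$. Splicing countably many copies of $E$ along the repeated term $N$ produces a doubly infinite exact complex $P^{\bullet}$ of projectives which, for a suitable indexing, satisfies $N=\ker d^{0}$ and whose kernels $\ker d^{j}$ cycle through $N,\Omega^{1}(N),\dots,\Omega^{n-1}(N)$. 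It then remains to check that $\Hom(P^{\bullet},A)$ is exact; inspecting the short exact sequences $0\to\ker d^{j}\to P^{j}\to\ker d^{j+1}\to 0$ reduces this to the vanishing of $\Ext^{1}(\Omega^{j}(N),A)$ for $0\le j\le n-1$, which by dimension shift is exactly the hypothesis $\Ext^{i}(N,A)=0$ for $i=1,\dots,n$. Thus $P^{\bullet}$ is totally acyclic with $N=\ker d^{0}$, so $N$ is Gorenstein-projective.

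For Part~(3), I would argue by descending induction along the syzygy sequences $0\to\Omega^{k+1}(N)\to P_{k}\to\Omega^{k}(N)\to 0$ for $k=d-1,d-2,\dots,0$, the inductive step being the claim: if $0\to X\to P\to Y\to 0$ is exact with $P$ projective, $X$ Gorenstein-projective and $\Ext^{1}(Y,A)=0$, then $Y$ is Gorenstein-projective. To prove the claim I would choose a totally acyclic complex $Q^{\bullet}$ of projectives with $X=\ker q^{0}$, set $Z:=\operatorname{coker}(X\hookrightarrow Q^{0})$ (which is again Gorenstein-projective, being the next kernel in $Q^{\bullet}$), and form the pushout of the two maps $X\hookrightarrow P$ and $X\hookrightarrow Q^{0}$; this yields a module $W$ fitting into short exact sequences $0\to Q^{0}\to W\to Y\to 0$ and $0\to P\to W\to Z\to 0$. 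The first of these splits because $\Ext^{1}(Y,Q^{0})=0$, the second because $\Ext^{1}(Z,P)=0$ (using $Z\in{}^{\perp}A$ from Part~(1)), so $Q^{0}\oplus Y\cong P\oplus Z$; removing the maximal projective summand from each side and applying the Krull--Schmidt theorem exhibits $Y$ as a direct summand of $Z$ up to a projective summand, hence Gorenstein-projective. Feeding the syzygy sequences of $N$ into this claim, where at the $k$-th step $\Ext^{1}(\Omega^{k}(N),A)\cong\Ext^{k+1}(N,A)=0$ by hypothesis, carries Gorenstein-projectivity from $\Omega^{d}(N)$ up to $\Omega^{0}(N)=N$.

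The step I expect to be the main obstacle is the passage from mere exactness of the spliced (respectively pushed-out) complex to its \emph{total} acyclicity, since that is precisely where the $\Ext$-vanishing hypotheses must be consumed; in Part~(3) there is the additional nuisance that the two pushout sequences split only after discarding a projective summand, so a Krull--Schmidt argument is unavoidable.
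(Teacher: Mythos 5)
The paper gives no argument for this lemma at all: it is quoted verbatim from Chen's lecture notes (\cite{Che}, Corollary 2.1.9 and 2.2.17) and used as a black box, so any comparison is with the standard proofs rather than with text in the paper. Your three arguments are correct and are essentially those standard proofs: for (1) the truncation of a totally acyclic complex computes $\Ext^{i}(M,A)$ for $i\geq 1$ (up to a harmless shift in your indexing of the spots), for (2) splicing the period-$n$ exact sequence gives a doubly infinite complex of projectives whose syzygies are $\Omega^{j}(N)$, $0\leq j\leq n-1$, and total acyclicity reduces to $\Ext^{1}(\Omega^{j}(N),A)\cong\Ext^{j+1}(N,A)=0$, which is exactly the hypothesis; and for (3) the pushout of $X\hookrightarrow P$ and $X\hookrightarrow Q^{0}$ with both resulting sequences split (using $\Ext^{1}(Y,A)=0$ and $Z\in{}^{\perp}A$ from (1)) gives $Y\oplus Q^{0}\cong Z\oplus P$, and descending induction along the syzygy sequences of $N$ then transports Gorenstein-projectivity from $\Omega^{d}(N)$ to $N$. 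The only point you pass over silently is the final step of (3): after Krull--Schmidt you only obtain that $Y$ is isomorphic to the direct sum of a projective module and a direct summand of $Z$, so you additionally need that projective modules are Gorenstein-projective and, more substantially, that the class of Gorenstein-projective modules is closed under direct summands; the latter is true and standard (it is also in \cite{Che}), but it is not immediate from the bare definition and should be stated or cited explicitly, since it is the ingredient that actually finishes your argument.
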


\begin{lemma}
\label{gorgleich}
If $A$ is a Nakayama algebra, then $A$-gproj $=\, ^{\perp}A$. 
\end{lemma}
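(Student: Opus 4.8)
The plan is to establish the nontrivial inclusion $^{\perp}A \subseteq A\text{-gproj}$, since the reverse inclusion is part (1) of the preceding lemma. Both classes are closed under finite direct sums and under direct summands (for $^{\perp}A$ this is just additivity of $\Ext$), so it suffices to treat an indecomposable $M$ with $\Ext^{i}(M,A)=0$ for all $i\ge 1$; projective modules are Gorenstein-projective, so I may assume $M$ is non-projective.

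The structural input is Proposition \ref{minpro2}: over a Nakayama algebra the syzygy of an indecomposable module is indecomposable or zero, hence the syzygy orbit $\{\Omega^{i}(M)\mid i\ge 0\}$ consists of indecomposable modules, and it is finite because Nakayama algebras are representation-finite. I then split into two cases according to whether the projective dimension of $M$ is finite. If $\operatorname{pd}(M)=d<\infty$, I claim $d=0$: for $d\ge 1$ dimension shifting gives $\Ext^{d}(M,A)\cong\Ext^{1}(\Omega^{d-1}(M),A)$, and this is nonzero, because in a minimal projective presentation $0\to P_{1}\xrightarrow{f}P_{0}\to\Omega^{d-1}(M)\to 0$ (with $P_{1}=\Omega^{d}(M)$ projective and nonzero) surjectivity of $\Hom(f,A)$ would, by splitting the induced surjection of finitely generated projective left modules and using reflexivity of $P_{0},P_{1}$, force $f$ to be split mono with image in $\rad P_{0}$, which is impossible. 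Since $\Ext^{d}(M,A)=0$ we conclude $d=0$, so $M$ is projective, hence Gorenstein-projective.

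If $\operatorname{pd}(M)=\infty$, then every $\Omega^{i}(M)$ is indecomposable and non-projective, so by finiteness of the orbit there are $0\le a<b$ with $\Omega^{a}(M)\cong\Omega^{b}(M)$. Setting $N:=\Omega^{a}(M)$ and $p:=b-a\ge 1$ gives $\Omega^{p}(N)\cong N$, while $\Ext^{i}(N,A)\cong\Ext^{a+i}(M,A)=0$ for all $i\ge 1$; hence part (2) of the preceding lemma shows $N\in A\text{-gproj}$. Since $\Ext^{i}(M,A)=0$ for $i=1,\dots,a$ and $\Omega^{a}(M)=N$ is Gorenstein-projective, part (3) of that lemma yields $M\in A\text{-gproj}$, which finishes the proof. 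The only genuinely delicate point is the finite projective dimension case, namely that a module over a Nakayama algebra lying in $^{\perp}A$ and of finite projective dimension must already be projective; the infinite case is a purely formal consequence of the eventual $\Omega$-periodicity of indecomposables together with parts (1)–(3) of the lemma.
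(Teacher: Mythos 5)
Your proof is correct and follows essentially the same route as the paper: eventual $\Omega$-periodicity of the syzygies of an indecomposable module over a (representation-finite) Nakayama algebra, combined with parts (2) and (3) of the preceding lemma. The only deviation is your separate treatment of the finite projective dimension case; this is correct but not needed, since when some syzygy vanishes the periodicity hypothesis of part (2) holds trivially for that (zero, hence Gorenstein-projective) syzygy and part (3) still applies, which is how the paper's single argument covers both cases.
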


\begin{proof}
We know that $A$-gproj $\subseteq\, ^{\perp}A$.
Now let $M \in\, ^{\perp}A$ with $M$ indecomposable. Since all syzygies over a Nakayama algebra of an indecomposable module are also indecomposable and since there is only a finite number of indecomposable modules, there exist numbers $k,n$ with : $\Omega^{n}(\Omega^{k}(M))=\Omega^{k}(M)$.
Since we also have $\Omega^{k}(M) \in\, ^{\perp}A$ by the formula $Ext^{i}(\Omega^{k}(M),A)=Ext^{i+k}(M,A)=0$, we know that $\Omega^{k}(M)$ is Gorenstein-projective by (2) of the above lemma.
Now by (3) of the above lemma also $M$ is Gorenstein-projective.
\ \ \textcolor{white}{.}
\end{proof}

\begin{lemma}
\label{Gorenstein}
An indecomposable injective and Gorenstein-projective module is projective.
\end{lemma}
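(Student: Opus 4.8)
The plan is to use the definition of Gorenstein-projectivity directly, together with the injectivity of $M$. Since $M$ is Gorenstein-projective, there is by definition a totally acyclic complex of projective $A$-modules
$$P^{\bullet}\colon \quad \cdots \rightarrow P^{-1} \stackrel{d^{-1}}{\rightarrow} P^0 \stackrel{d^0}{\rightarrow} P^1 \rightarrow \cdots$$
with $M = \ker(d^0)$. In particular $M$ is a submodule of the projective module $P^0$; equivalently, $M$ fits into a short exact sequence $0 \rightarrow M \rightarrow P^0 \rightarrow C \rightarrow 0$ with $P^0$ projective (and, if one wishes, $C$ again Gorenstein-projective, although this will not be needed).

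Next I would invoke that $M$ is injective. An injective module is a direct summand of every module that contains it, so the inclusion $M \hookrightarrow P^0$ splits; equivalently, the short exact sequence above splits because $\Ext^1(C,M) = 0$. Hence $M$ is isomorphic to a direct summand of the projective module $P^0$, and a direct summand of a projective module is again projective. Therefore $M$ is projective, which is the claim.

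I do not expect a genuine obstacle here: the only step that requires a moment's care is extracting from the totally acyclic complex an honest embedding of $M$ into a projective module, and this is immediate since $M$ is by construction the kernel of $d^0$, hence literally a submodule of $P^0$. Note that the indecomposability hypothesis plays no role in this argument; it is presumably stated only because that is the form in which the lemma will be applied later.
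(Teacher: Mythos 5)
Your proof is correct and is essentially the paper's own argument: the defining totally acyclic complex embeds $M$ into a projective module, injectivity splits this embedding, and a summand of a projective module is projective. Your observation that indecomposability is not actually needed is also accurate.
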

\begin{proof}
By definition, a Gorenstein-projective module $M$ embeds in a projective module. If this module $M$ is additionally injective, this embedding splits and $M$ is itself projective.
\end{proof}

\subsection{CoGen-dimension and dominant dimension}

\begin{definition}
For a finite dimensional algebra $A$ and a module $M$ we define $\phi_M$ as
$$\phi_M:= \inf \{ r \geq 1 | Ext_{A}^{r}(M,M)\neq 0 \}, $$
with the convention $\inf(\emptyset)= \infty$.
We call a module $M$ which is a generator and a cogenerator for short a \emph{CoGen}.
We also define $\Delta_A:= \sup \{ \phi_M | M$ is a nonprojective CoGen $\}.$
\end{definition}

We remark that for a non-selfinjective algebra $A$ \newline $\Delta_A=  \inf \{ r \geq 1 | Ext_{A}^{r}(D(A),A) \neq 0 \}$, and for a selfinjective algebra $A$ \newline $\Delta_A=  \sup \{ \phi_M | M$ is a non-projective indecomposable A-module$ \}$.

\begin{Theorem of Mueller}
(see \cite{Mue})
If $M$ is a CoGen of $A$, then
the dominant dimension of $B:=End_A (M)$ is equal to $\phi_M+1$.
\label{Mueller}
\end{Theorem of Mueller}
 
\begin{Nakayama conjecture}
The Nakayama conjecture states that every non-selfinjective finite dimensional algebra has finite dominant dimension.
\label{2conj}
\end{Nakayama conjecture}
As a corollary of Mueller's theorem, the Nakayama conjecture is equivalent to the finiteness of $\Delta_A$, for every finite dimensional algebra $A$.

\begin{Yamagata conjecture}
\label{3conj}
Yamagata (in \cite{Yam}) states the even stronger conjecture that the dominant dimensions of non-selfinjective algebras with a fixed number of simple modules are bounded by a function of the number of simple modules of $A$. More precisly this means that $domdim(A) \leq f(n)$ for any non-selfinjective algebra with $n$ simple modules and a finite function $f$.
\end{Yamagata conjecture}
In this section, we will prove Yamagata's conjecture in case $eAe$ is a Nakayama algebra or a quiver algebra with an acyclic quiver, when $eA$ is the minimal faithful injective-projective $A$-module.

\begin{lemma}
Let $A$ be a non-selfinjective connected algebra of finite Gorenstein dimension $g=injdim(A)$.
Then $\Delta_A \leq g$.
\end{lemma}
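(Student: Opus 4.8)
The plan is to reduce the statement to the single assertion $\Ext^g_A(D(A),A)\neq 0$. By the remark following the definition of $\Delta_A$, for the non-selfinjective algebra $A$ one has $\Delta_A=\inf\{r\geq 1\mid \Ext^r_A(D(A),A)\neq 0\}$; moreover $g=\operatorname{injdim}(A_A)\geq 1$, since the right regular module of a non-selfinjective algebra is not injective, and $g<\infty$ by hypothesis. Hence, once $\Ext^g_A(D(A),A)\neq 0$ is established, $g$ lies in the set above and $\Delta_A\leq g$. Recall also that $D(A)\cong\bigoplus_i I_i$ is the direct sum of all indecomposable injective right $A$-modules, so it suffices to exhibit one vertex $j$ with $\Ext^g_A(I_j,A)\neq 0$.

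To produce such a $j$, I would use the standard description of injective dimension in terms of simple modules: since $\operatorname{injdim}(A_A)=g$ there is a simple module $S_j$ with $\Ext^g_A(S_j,A)\neq 0$, while $\Ext^{g+1}_A(-,A)=0$. Let $I_j$ be the injective envelope of $S_j$, so that $S_j=\soc(I_j)$, and apply $\Ext^\bullet_A(-,A)$ to the short exact sequence $0\to S_j\to I_j\to I_j/S_j\to 0$. The relevant part of the long exact sequence is
$$\cdots \longrightarrow \Ext^g_A(I_j,A)\longrightarrow \Ext^g_A(S_j,A)\longrightarrow \Ext^{g+1}_A(I_j/S_j,A)=0,$$
so $\Ext^g_A(I_j,A)$ surjects onto $\Ext^g_A(S_j,A)\neq 0$ and is in particular nonzero. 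As $I_j$ is a direct summand of $D(A)$, this gives $\Ext^g_A(D(A),A)\neq 0$, and the proof is complete.

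I do not expect a genuine obstacle: the argument is essentially two applications of a long exact sequence. The only point needing care is the bookkeeping of sides --- the module $D(A)$ in the formula for $\Delta_A$ must be read as $D({}_AA)$, the injective cogenerator of $\mod A$, and it is $\operatorname{injdim}(A_A)$ (equivalently, since $A$ is a Gorenstein algebra, $\operatorname{injdim}({}_AA)$) that is being bounded; the two-sided nature of the Gorenstein hypothesis is what really enters, even though the computation only involves the right side. As an alternative route one may use that a Gorenstein algebra satisfies $\operatorname{projdim}_A D(A)=\operatorname{injdim}(A_A)=g$, take a minimal projective resolution $0\to P_g\xrightarrow{\iota}P_{g-1}\to\cdots\to P_0\to D(A)\to 0$ with $P_g\neq 0$, and observe that $\Ext^g_A(D(A),A)=\operatorname{coker}(\Hom_A(\iota,A))$; since $\iota$ is a radical morphism and $\Hom_A(-,A)$ sends radical morphisms between projectives to radical morphisms, this cokernel of a radical map into the nonzero projective $\Hom_A(P_g,A)$ is nonzero.
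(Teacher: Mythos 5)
Your proof is correct, but the key step is carried out differently from the paper. Both arguments reduce, via the remark that $\Delta_A=\inf\{r\geq 1\mid \Ext^r_A(D(A),A)\neq 0\}$ for non-selfinjective $A$, to showing that $\Ext^g_A(D(A),A)\neq 0$. The paper gets this in one line: it writes $g=\operatorname{injdim}(A)=\operatorname{projdim}(D(A))$ and invokes the standard fact that $\operatorname{projdim}(M)=\sup\{r\mid \Ext^r_A(M,A)\neq 0\}$ when $\operatorname{projdim}(M)<\infty$, so that the sup is $g$ and dominates the inf. Your primary route instead stays entirely on the injective side: pick a simple $S_j$ with $\Ext^g_A(S_j,A)\neq 0$ (detected by the socle of the last term of the minimal injective coresolution of $A$), and push the nonvanishing to the injective envelope $I_j$ through the long exact sequence for $0\to S_j\to I_j\to I_j/S_j\to 0$, using $\Ext^{g+1}_A(-,A)=0$. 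This buys something the paper's formulation glosses over: the identity $\operatorname{injdim}(A_A)=\operatorname{projdim}(D(A)_A)$ really identifies the left injective dimension with $\operatorname{projdim}(D(A)_A)$, so the paper's chain implicitly leans on the two-sidedness built into its notion of Gorenstein algebra, whereas your long-exact-sequence argument needs only $\operatorname{injdim}(A_A)=g<\infty$ (your closing remark that "the two-sided nature of the hypothesis is what really enters" is therefore too pessimistic about your own main argument; it applies only to your alternative route). That alternative route --- minimal projective resolution of $D(A)$ and the observation that $\Hom_A(-,A)$ preserves radical maps between projectives --- is essentially the paper's proof with its quoted fact unpacked.
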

\begin{proof}
We have
\begin{align*}
g=injdim(A)&=projdim(D(A))=\\
\sup \{ r \geq 1 | Ext_{A}^{r}(D(A),A) \neq 0 \} &\geq \inf \{ r \geq 1 | Ext_{A}^{r}(D(A),A) \neq 0 \}\\
&= \Delta_A,
\end{align*}
where we used $projdim(M)=sup \{ r \geq 1 | Ext_{A}^{r}(M,A) \neq 0 \}$, in case $M$ has finite projective dimension.
\ \ \textcolor{white}{.} 
\end{proof}

The following generalizes and gives an easier proof of Theorem 1.2.3 of \cite{Abr}, which states (3) of the following Corollary.
\begin{corollary}
\begin{enumerate}
\item Let $A$ be an connected acyclic algebra with $d \geq 2$ simple modules.  Then $gldim(A) \leq d-1$ and, therefore, $\Delta_A \leq d-1$. 
\item Let $A$ be a QF-3 algebra with $s$  projective-injective indecomposable modules such that $eAe$ is acyclic, where $eA$ is the minimal faithful injective-projective module. Then domdim($A$) $\leq s$. 
\item Let $A$ be an acyclic algebra with $s$ indecomposable injective-projective modules, then domdim($A$) $\leq s$. 
\item For an LNakayama algebra $A$ with $n$ simple modules, the following holds:  $\Delta_A \leq n-1$.
\end{enumerate}
\end{corollary}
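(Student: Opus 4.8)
The plan is to reduce all four statements to the classical estimate that a connected algebra with acyclic Gabriel quiver and $d$ simple modules has global dimension at most $d-1$, and then to feed this into the preceding lemma, into Mueller's theorem, and into the Morita--Tachikawa correspondence. In outline: (4) is literally the case of (1) in which $A$ is an LNakayama algebra (these are connected and acyclic, so $d=n$ there); (2) follows from (1) together with Morita--Tachikawa and Mueller; and (3) follows from (2) once one checks that $eAe$ is again acyclic whenever $A$ is. So the real content lies in (1), and more precisely in the global dimension bound.

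For (1), I would first prove $gldim(A)\le d-1$ by induction on $d$ via one-point extensions. Since $A$ is connected and acyclic, its quiver $Q$ has a source vertex $0$; put $e:=1-e_0$ and $B:=eAe$, an acyclic algebra on $d-1$ vertices, so that $A$ is the one-point extension $B[M]$ with $M:=\rad(e_0A)=e_0Ae$ (the radical of $P_0$ is supported away from $0$ because $0$ is a source, hence $e_0Ae_0=Ke_0$). A projective $B$-resolution of any $A$-module on which $e_0$ acts as zero is also a projective $A$-resolution, so $pd_A(S_j)=pd_B(S_j)$ for $j\ne 0$ and $pd_A(S_0)=pd_B(M)+1$; therefore $gldim(A)=\max\{gldim(B),pd_B(M)+1\}\le gldim(B)+1\le d-1$, the base case $d=1$ (where $A=K$) being trivial, and the case where removing the source disconnects $B$ being handled by passing to connected components. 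Next, a connected acyclic algebra with $d\ge 2$ simple modules is not selfinjective (a selfinjective algebra of finite global dimension is semisimple, hence connected only if it has a single simple module); thus $A$ is non-selfinjective of finite Gorenstein dimension $g=injdim(A)\le gldim(A)\le d-1$, and the preceding lemma gives $\Delta_A\le g\le d-1$. This proves (1), and (4) is the special case of (1) where $A$ is an LNakayama algebra (the case $n=1$ being vacuous).

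For (2), if $domdim(A)\le 1$ there is nothing to prove, since a QF-3 algebra has $domdim(A)\ge 1$ and $s\ge 1$. So assume $domdim(A)\ge 2$. By the Morita--Tachikawa correspondence $A\cong\End_B(M)$ with $B=eAe$ and $M$ a generator-cogenerator of $B$, and the number of simple $B$-modules equals the number $s$ of indecomposable projective-injective $A$-modules (equivalently, the number of indecomposable summands of $eA$). Since $B$ is acyclic it is non-selfinjective (we set aside the degenerate semisimple case, where $domdim=\infty$), so $M$ is a nonprojective CoGen of $B$ and hence $\phi_M\le\Delta_B$; applying (1) and the preceding lemma to the connected components of $B$ yields $\Delta_B\le gldim(B)\le s-1$. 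By Mueller's theorem $domdim(A)=\phi_M+1\le s$. For (3): if $domdim(A)=0$ the claim is trivial, and if $domdim(A)\ge 1$ then $A$ is QF-3; moreover $eAe$ (with $eA$ the minimal faithful projective-injective module) is again acyclic, because any nonzero element of $e_iAe_j$ is a linear combination of paths of $Q$ from $i$ to $j$, so a topological ordering of the vertices of $Q$ restricts to one on the vertices of $eAe$. Since $s$ is then the number of simple $eAe$-modules, (2) applies and gives $domdim(A)\le s$.

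I expect the one genuinely substantive point to be the global dimension bound $gldim(A)\le d-1$ and its use for the (possibly disconnected) algebras $eAe$ arising in (2) and (3); once that is in hand, the rest is a formal assembly of results already in the paper. A secondary point requiring care is the harmless selfinjective/semisimple degeneracy, where $domdim=\infty$ and the stated bounds would fail; this is excluded by the standing convention that the algebras under consideration are connected (and, tacitly, not semisimple).
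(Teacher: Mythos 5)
Your proposal is correct, and the overall assembly is the same as the paper's: the key chain is $\Delta_A \leq \operatorname{injdim}(A) \leq gldim(A) \leq d-1$ via the preceding lemma for (1), then Morita--Tachikawa plus Mueller's theorem for (2), acyclicity of $eAe$ for (3), and specialization to LNakayama algebras for (4). The differences are local rather than structural. Where the paper simply cites Farnsteiner for the bound $gldim(A) \leq d-1$, you prove it from scratch by induction on one-point extensions at a source vertex; this is a standard and correct argument (note only that it needs $\rad(e_0A)\neq 0$, which holds since $A$ is connected with $d\geq 2$ simples, and the passage to connected components of $eAe$ that you mention), so you have made the corollary self-contained at the cost of a page of routine verification. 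You derive (4) directly from (1), which is in fact the natural deduction since (4) concerns $\Delta_A$ rather than the dominant dimension; the paper's reference to (3) at that point is best read as a reference to (1). Finally, you are more careful than the paper about two degeneracies it passes over silently: the possible disconnectedness of $B=eAe$ in (2) and (3), which you handle by working componentwise (using that a nonprojective CoGen must be nonprojective over some non-semisimple, hence non-selfinjective, acyclic component), and the semisimple/selfinjective case where $\operatorname{domdim}=\infty$, which you correctly flag as excluded by the standing conventions. These refinements strengthen rather than alter the paper's argument.
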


\begin{proof}
\begin{enumerate}
\item For an elementary proof of $gldim(A) \leq d-1$, see e.g. \cite{Farn}. Then $\Delta_A \leq d-1$ follows from the previous lemma, since the equality $gldim(A)=injdim(A)$ holds, in case $A$ has finite global dimension. 
\item In case $A$ has dominant dimension equal to one, there is nothing to show. Thus assume that $A$ has dominant dimension at least two. Then $A \cong End_B(M)$ for some algebra $B$ and a generator-cogenerator $M$, by the Morita-Tachikawa correspondence. But $B \cong eAe$ is acyclic with $s$ simple modules and the result then follows from Mueller's theorem since $domdim(A) \leq \Delta_B +1 \leq s$ by (1).
\item This holds, since with $A$ also $eAe$ is acyclic for every idempotent $e \in A$. 
\item This is clear by (3), since LNakayama algebras are acyclic. 
\end{enumerate}
\end{proof}

We give the following example of  a nonacyclic algebra $\Lambda$ such that $C=e \Lambda e$ is acyclic to show that the above is really a generalisation of Theorem 1.2.3 of \cite{Abr}.
\begin{example}
Take any acyclic endowild algebra $C$ (this means that for every finite dimensional algebra $R$, there is a finite dimensional $C$-module $N$ with $R \cong End_C(N)$. Examples of such algebras $C$ are wild hereditary algebras over an algebraically closed field, see \cite{SimSko3} page 329) and a basic $C$-module $M$ such that $End_C(M)$ is not acyclic.
We claim that then the algebra $\Lambda=End_C(B(C \oplus D(C) \oplus M))$ is not acyclic, where $B(X)$ denotes the basic version of a module $X$. Denoting here by $f$ the projection from $B(C \oplus D(C) \oplus M)$ onto $B(M)$, the algebra $f \Lambda f \cong End_C(B(M))$ is not acyclic and thus $\Lambda$ is not acyclic. On the other hand, denoting by $e \in \Lambda$ idempotent such that $e \Lambda$ is the minimal faithful projective-injective right module, the algebra $e \Lambda e \cong C$ is acyclic.
\end{example}

For the main lemma, recall the following result:
\begin{lemma}
Let $A$ be a finite dimensional algebra, $N$ be an indecomposable $A$-module and $S$ a simple $A$-module.
Let $(P_i)$ be a minimal projective resolution of $N$ and $(I_i)$ a minimal injective coresolution of $N$. 
\begin{enumerate}
\item For $l \geq 0$, $Ext^{l}(N,S) \neq 0$ iff $S$ is a quotient of $P_l$. 
\item For $l \geq 0$, $Ext^{l}(S,N) \neq 0$ iff $S$ is a submodule of $I_l$.
\end{enumerate}
\end{lemma}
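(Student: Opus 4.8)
The plan is to collapse each $\Ext$-group into a single $\Hom$-space by using minimality of the resolutions, after which both statements become elementary facts about projective covers and injective envelopes.

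For (1), I would apply $\Hom_A(-,S)$ to the deleted minimal projective resolution $\cdots \to P_1 \xrightarrow{d_1} P_0 \to 0$ of $N$. By minimality, $\operatorname{im}(d_l)\subseteq P_{l-1}J = \rad P_{l-1}$ for every $l\geq 1$; since any homomorphism from $P_{l-1}$ to the simple module $S$ factors through $\top(P_{l-1}) = P_{l-1}/P_{l-1}J$, precomposition with $d_l$ annihilates it. Hence every differential of the complex $\Hom_A(P_\bullet, S)$ is zero, so $\Ext^l_A(N,S)\cong \Hom_A(P_l,S)$ for all $l\geq 0$. Finally, for a projective module $P$ and a simple module $S$ one has $\Hom_A(P,S)\cong \Hom_A(\top P,S)$, which is nonzero exactly when $S$ is a summand of the semisimple module $\top P$, i.e. exactly when $S$ is a quotient of $P$; concretely, writing $P_l=\bigoplus_j e_{i_j}A$ one has $\Hom_A(e_{i_j}A,S)\cong Se_{i_j}\neq 0$ iff $S\cong S_{i_j}$.

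For (2), I would argue dually with $\Hom_A(S,-)$ applied to the deleted minimal injective coresolution $0\to I_0\xrightarrow{d^0} I_1\to\cdots$ of $N$. Setting $C_0:=N$ and $C_i:=\Omega^{-i}(N)$, minimality gives short exact sequences $0\to C_i\to I_i\to C_{i+1}\to 0$ in which $C_i\hookrightarrow I_i$ is an injective envelope, so $\soc I_i=\soc C_i\subseteq C_i=\ker(d^i)$. Any homomorphism $S\to I_i$ has image a simple submodule (or zero), hence lands in $\soc I_i$, so $d^i$ kills it; thus all differentials of $\Hom_A(S,I_\bullet)$ vanish and $\Ext^l_A(S,N)\cong \Hom_A(S,I_l)$ for all $l\geq 0$. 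Since $S$ is simple, a nonzero map $S\to I_l$ is injective, so this $\Hom$-space is nonzero precisely when $S$ is isomorphic to a submodule of $I_l$, equivalently a summand of $\soc I_l$.

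The hard part, such as it is, is making the minimality input precise on both sides: that a minimal projective resolution has all its differentials landing in the radical of the previous term, and that in a minimal injective coresolution each cosyzygy sits essentially inside its injective hull so that its socle is contained in the kernel of the next differential. Both are immediate from the definitions of projective cover and injective envelope, and once they are recorded the remaining identifications of $\Hom$ out of a projective and $\Hom$ into an injective are routine. The case $l=0$ is consistent with this, since $\top P_0\cong\top N$ and $\soc I_0\cong\soc N$.
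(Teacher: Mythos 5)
Your proof is correct, and there is nothing to compare it against in detail: the paper does not prove this lemma itself but only cites Benson, Corollary 2.5.4. Your argument — minimality forces every differential of $\Hom_A(P_\bullet,S)$, respectively $\Hom_A(S,I_\bullet)$, to vanish, so $\Ext^l$ collapses to $\Hom_A(P_l,S)$, respectively $\Hom_A(S,I_l)$ — is precisely the standard proof given in that reference.
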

\begin{proof}
See \cite{Ben} Corollary 2.5.4. 
\end{proof}

\begin{Mainlemma}
Let $A$ be a finite dimensional non-selfinjective Nakayama algebra with $n \geq 2$ simple modules. Let $N$ be an $A$-module and $S$ a simple $A$-module.
\begin{enumerate}
\item Assume that $Ext^{l}(N,S) \neq 0$ for some $l \geq 1$. Then $\inf \{ s \geq 1 | Ext^{s}(N,S) \neq 0 \} \leq 2n-2$.
\item Assume that $Ext^{l}(S,N) \neq 0$ for some $l \geq 1$. Then $\inf \{ s \geq 1 | Ext^{s}(S,N) \neq 0 \} \leq 2n-2$.
\end{enumerate}
\end{Mainlemma}
\begin{proof}
We only prove (1) since (2) follows dually. \newline
We can assume that $Ext^{1}(N,S)=0$, since the result is obvious in case $Ext^{1}(N,S)\neq0$. \newline
So in the following we look at the problem of determining the smallest possible finite value $s \geq 2$ with respect to the following properties: \newline
$Ext^{s}(N,S) \neq 0$, but $Ext^{i}(N,S) = 0$, for $i=1,...,s-1$, for an indecomposable module $N$. $Ext^{s}(N,S) \neq 0$ simply means that in the minimal projective resolution $(P_i)$ of $N$ there is a direct summand of $P_s$ isomorphic to the projective cover of $S=S_r:=e_rA/e_rJ$ by the previous lemma.
By \hyperref[minpro2]{\ref*{minpro2}} the minimal projective resolution has the form:
\begin{tiny}
\xymatrix{
\cdots \ar[r] & e_{f^r(j)} A  \ar[r] & e_{f^r(i)}A \ar[r] & \cdots \ar[r] & e_{f^2(j)} A                \ar@{->} `r/8pt[d] `/10pt[l] `^dl[llll]|{} `^r/3pt[dll] [dllll] \\
e_{f^2(i)} A\ar[r] & e_{f^1(j)} A \ar[r] & e_{f^1(i)} A \ar[r] & e_j A \ar[r] & e_i A \ar[r] & N \ar[r] & 0}
\end{tiny} \newline \newline
$\textit{Claim 1.}$ $f$ is not surjective. \newline
$\textit{Proof:}$ If $f$ were surjective, it would be bijective and, because of $soc(e_i A)= S_{f(i)-1}$, for every $i$, $A$ would be selfinjective (see \cite{SkoYam} Chapter IV. Theorem 6.1.), contradicting our assumption that $A$ is not selfinjective. So Claim 1 is proved. \newline \newline
Now, $Ext^u(N,S) \neq 0$, for some $u \geq 1$, tells us $e_rA \cong P_u$. \newline \newline
$\textit{Claim 2.}$ The smallest index $i$ with $e_rA \cong P_i$ must be smaller than or equal to $2n-2$. \newline
$\textit{Proof:}$ Since $f$ is a mapping from a finite set to a finite set, there is a minimal number $w$ with $Im(f^w)=Im(f^{w+1})$. Define $X:=Im(f^w)$. Note that the cardinality of $X$ is smaller than or equal to $n-w$, since $f : \mathbb{Z}/n \rightarrow \mathbb{Z}/n$ is not surjective and therefore the number of elements in $Im(f^{i})$ decreases by at least 1 as long as $i < w$ in the sequence $Im(f) \supset Im(f^2) \supset \cdots \supset Im(f^w)$.\newline
$f$ is a bijection from $X$ to $X$, since $f\big|_X : X \rightarrow X$ is surjective (and $X$ a finite set).
When we have reached
$$\cdots \rightarrow e_{f^{w+1}(j)} A \rightarrow e_{f^{w+1}(i)} A \rightarrow e_{f^w(j)} A \rightarrow e_{f^w(i)} A \rightarrow \cdots,$$
$f$ acts as a cyclic permutation on the $\{f^l(i)\}$ and $\{f^l(j)\}$ for $l \geq w$. Note that after the term $e_{f^{w+n-w-1}(i)} A = e_{f^{n-1}(i)} A=P_{2n-2}$ (recall that $X$ has cardinality at most $n-w$) some index $q \in X$ must exist such that $P_{2n-1}=e_qA$ and this indecomposable projective module $e_qA$ is also isomorphic to $P_{u}$ for an $u \leq 2n-2$.
Therefore, the index $r$ must occur in the first $2n-2$ terms, since later there are only indices which already occurred before.
\end{proof}

\begin{theorem}
\label{best result}
Let $A$ be an algebra with dominant dimension larger than $1$ and the property that $eA$ is a minimal faithful projective injective module and $eAe$ is a Nakayama-algebra with $s$ simple modules. Then: \newline
$$domdim(A)\leq \Delta_{eAe} +1 \leq 2s.$$
Especially, Yamagata's conjecture is true in this special case.
\end{theorem}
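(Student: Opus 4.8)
The plan is to combine the Morita--Tachikawa correspondence, Mueller's theorem, and the Mainlemma, in that order. First I would dispose of trivialities: if $domdim(A)=1$ there is nothing to prove since $s\geq1$, and I may assume $A$ is non-selfinjective, which is the intended setting (cf.\ Result~1; were $A$ selfinjective, faithfulness of $eA$ would force $e=1$, so that $A=eAe$ would itself be a selfinjective Nakayama algebra). So assume $domdim(A)\geq2$. By the Morita--Tachikawa correspondence $A\cong\End_B(M)$, where $B:=eAe$ is the given Nakayama algebra with $s$ simple modules and $M:=eA$ is a generator-cogenerator, hence a CoGen, of $\mod B$. The module $M$ is not projective over $B$: otherwise $M$ would be a projective generator, so $A\cong\End_B(M)$ would be Morita equivalent to $B$ and in particular non-selfinjective (as $A$ is); then $B$ would be a non-selfinjective, hence representation-finite, Nakayama algebra, so $domdim(B)$ would be finite by Nakayama's conjecture, whereas $domdim(A)=\phi_M+1=\infty$ because $\Ext^{\geq1}_B(M,M)=0$ --- a contradiction. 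Thus $M$ is a non-projective CoGen, so $\phi_M\leq\Delta_B$ by the definition of $\Delta_B$, and Mueller's theorem gives
$$domdim(A)=\phi_M+1\leq\Delta_B+1=\Delta_{eAe}+1,$$
which is the first inequality.

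It then remains to prove $\Delta_{eAe}\leq2s-1$; write $B=eAe$. Suppose first $B$ is non-selfinjective, so $\Delta_B=\inf\{r\geq1\mid\Ext^r_B(D(B),B)\neq0\}$; since $D(B)$ is the direct sum of the indecomposable injectives $I_j$, it suffices to exhibit some $j$ and some $r\leq2s-1$ with $\Ext^r_B(I_j,B)\neq0$. I would choose an indecomposable injective $I_j$ that is not projective --- such $I_j$ exists because the numbers of indecomposable injectives and of indecomposable projectives coincide, so if every injective were projective then every projective would be injective and $B$ would be selfinjective. By Lemma~\ref{Gorenstein} a non-projective injective module is not Gorenstein-projective, hence by Lemma~\ref{gorgleich} it does not lie in ${}^{\perp}B$; therefore $\Ext^r_B(I_j,B)\neq0$ for some $r\geq1$, and passing along the long exact sequences of $0\to\rad P\to P\to\top P\to0$ to a composition factor of $B$ produces, for the same $r$, a simple module $S$ with $\Ext^r_B(I_j,S)\neq0$. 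Thus $\{l\geq1\mid\Ext^l_B(I_j,S)\neq0\}$ is non-empty, and part~(1) of the Mainlemma applied to the indecomposable module $I_j$ and the simple module $S$ bounds its infimum by $2s-2$. The remaining, and crucial, point is to choose $S$ wisely --- namely as $\top\Omega^{r_0}(I_j)$, where $r_0$ is the smallest degree in which $\Ext^{\bullet}_B(I_j,B)$ is non-zero --- and to argue, using that syzygies of indecomposables over a Nakayama algebra stay indecomposable and eventually become periodic and using the explicit description of resolutions in Propositions~\ref{minpro2} and~\ref{minpro3}, that this $S$ first contributes to $\Ext^{\bullet}_B(I_j,-)$ exactly in degree $r_0$; then the Mainlemma's bound applies to $r_0$, giving $\Delta_B\leq2s-2$. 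If instead $B$ is selfinjective, then $\Delta_B=\sup\{\phi_N\mid N\text{ non-projective indecomposable}\}$, and for each such $N$ one has $\Omega^p(N)\cong N$ with period $p\leq2s$ (by the syzygy formula over a selfinjective Nakayama algebra), whence $\Ext^p_B(N,N)\neq0$ and a short refinement of this estimate yields $\phi_N\leq2s-1$. In either case $\Delta_{eAe}\leq2s-1$, so $domdim(A)\leq\Delta_{eAe}+1\leq2s$; and since $\Delta_{eAe}$ is finite, this in particular proves Yamagata's conjecture in the stated generality.

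The hard part is the bookkeeping step just highlighted. The Mainlemma controls the first degree in which $\Ext^{\bullet}_B(I_j,-)$ becomes non-zero on a \emph{simple} module, whereas what is needed is the first degree in which it becomes non-zero on the \emph{projective} module $B$, and these two degrees need not coincide because the connecting maps of $0\to\rad P\to P\to\top P\to0$ shift homological degree. Making precise the claim that the first degree of non-vanishing relative to $B$ equals the first degree of non-vanishing relative to a suitably chosen simple module amounts to re-running the index-counting from the proof of the Mainlemma in this situation, and this is where I expect the real work to lie. By comparison the remaining ingredients --- the Morita--Tachikawa and Mueller inputs, the non-projectivity of $M$, and the selfinjective sub-case --- are routine.
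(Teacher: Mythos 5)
Your skeleton coincides with the paper's (Morita--Tachikawa plus Mueller's theorem, then a bound $\Delta_{eAe}\leq 2s-1$ proved by splitting into $eAe$ non-selfinjective and selfinjective), but the step you yourself flag as ``where the real work lies'' is a genuine gap, and the fix you sketch is not how it can be closed. In the non-selfinjective case you want to bound $r_0:=\inf\{r\geq1\mid\Ext^r(I_j,B)\neq0\}$ by trading the second argument $B$ for a simple module and then quoting part (1) of the Mainlemma; this requires that the first degree of non-vanishing of $\Ext^{\bullet}(I_j,S)$ coincide with $r_0$, and for your candidate $S=\top\Omega^{r_0}(I_j)$ there is no reason for that. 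Over a Nakayama algebra each term of the minimal projective resolution of $I_j$ is the indecomposable projective cover of the corresponding syzygy, so $\Ext^l(I_j,S)\neq0$ exactly when that cover has top $S$, and this top can recur in degrees $l<r_0$ in which $\Ext^l(I_j,B)=0$; re-running the index-count of the Mainlemma only controls which projectives occur where, not when $\Ext^{\bullet}(-,B)$ first fails to vanish. The paper avoids the mismatch entirely: it does not take an arbitrary non-projective injective, but the specific one $I=e_iA/\soc(e_iA)$ (with $i$ chosen so that $c_{i-1}\leq c_i-1$), whose first syzygy $\Omega(I)=\soc(e_iA)$ is \emph{simple}. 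The dimension shift $\Ext^{l+1}(I,A)\cong\Ext^{l}(\soc(e_iA),A)$ then puts a simple module in the \emph{first} argument while keeping the regular module in the second, and part (2) of the Mainlemma (which allows an arbitrary module $N$, here $N=A$, in the second slot) applies verbatim, giving $\inf\{l\mid\Ext^l(S,A)\neq0\}\leq2s-2$ and hence $\Delta\leq2s-1$, which is all that is needed (your sharper claim $\Delta_B\leq2s-2$ is nowhere substantiated).

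The selfinjective case is also thinner than you suggest: the periodicity estimate $\Omega^p(N)\cong N$ with $p\leq2s$ only yields $\phi_N\leq2s$, and the ``short refinement'' is precisely the one non-trivial situation, namely Loewy length $k$ prime to the number $s$ of simples, where the period is exactly $2s$ and even-degree Ext gives nothing better. The paper handles this case not by refining the period but by the explicit computation of odd Ext groups from Section 2.1: $\Ext^{2i-1}(M,M)=\ker(R_{ik,s})\neq0$ as soon as $e_0J^se_{ik}$ contains a path of length at least $k-s$, which happens for some $i\leq s$ because the residues $ik$ exhaust $\mathbb{Z}/s$. So both halves of the key lemma need arguments you have not supplied; the framing around them (Mueller, non-projectivity of $eA$ over $eAe$, counting simples of $eAe$ by projective-injectives of $A$) is correct and matches the paper.
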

\begin{proof}
We first prove the following lemma:
\begin{lemma}
\label{Grenze1}
For a Nakayama algebra with $n$ simple modules, the following holds: $\Delta_A \leq 2n-1$.
\end{lemma}
We split the proof of this lemma in two cases: one case of a non-selfinjective Nakyama algebra and in the other case the Nakayama algebra is selfinjective.

\begin{Case 1}
For a non-selfinjective CNakayama algebra $A$ with $n$ simple modules, $\Delta_A \leq 2n-1$.
\end{Case 1}

\begin{proof}
There is an injective indecomposable module $I$ of the form $I=eA/soc(eA)$: The  module $I=eA/soc(eA)$ is injective, when $i$ is chosen such that $c_{i-1} \leq c_i -1$ and $e:=e_i$, which is always possible when $A$ is not selfinjective. This module is not Gorenstein-projective by \hyperref[Gorenstein]{Corollary \ref*{Gorenstein}}, since it is injective, but not projective.
By \hyperref[gorgleich]{Lemma \ref*{gorgleich}} $I$ is not in $^{\perp}A$.
Therefore, there is a smallest index $k \geq 1$ with $Ext^{k}(I,A) \neq 0$. Thus also $Ext^{k}(D(A),A) \neq 0$ which implies the lemma, if we show that $k \leq 2n-1$.
If $Ext^{1}(I,A) \neq 0$, there is nothing to prove. So we assume that $Ext^{1}(I,A) =
 0$. Denote soc(eA) by $S=S_r$ (which means that the projective cover of S is $e_r A$). In general we have $Ext^{i}(S,A)=Ext^{i}(\Omega(I) ,A) =  Ext^{i+1}(I ,A)$. Therefore, we will look in the following for the smallest index s with $Ext^{s}(S,A) \neq 0$. But by the main lemma $\inf \{ s \geq 1 | Ext^{s}(S,A) \neq 0 \} \leq 2n-2$. \newline
Because of $Ext^{i}(S,A)=Ext^{i}(\Omega(I) ,A) = Ext^{i+1}(I ,A)$ and $2n-2+1=2n-1$ we proved case 1.

\end{proof}

For the next case, recall the results about calculating minimal projective resolutions and $Ext^{i}(M,M)$ for an indecomposable module $M$ in a selfinjective Nakayama algebra from section 2.1.
\begin{Case 2}
A selfinjective Nakayama algebra $A$ with $n$ simple modules, satisfies: 
$$\Delta_A \leq 2n-1.$$
\label{selftheo}
\end{Case 2}
\begin{proof}
To prove this, we have to show that $\phi_M \leq 2n-1$ for all nonprojective indecomposable modules $M$. \newline
We can assume that $A$ has Loewy length $k$ and $M=e_0J^{s}$, with $1\leq s \leq k-1$.
We consider two cases: \newline
\underline{First case:} $k$ is a zero divisor in $\mathbb{Z}/n$.
Then there is a $q$ with $kq\equiv_n n \equiv_n 0$ and $1 \leq q \leq n-1$.
We know that $\Omega^{2i}(M)=e_{ik}J^s$ and, therefore, $\Omega^{2q}(M)=e_{qk}J^s = e_0 J^s =M$. Consequently, $Ext^{2q}(M,M)=\underline{Hom}(\Omega^{2q}(M),M) = \underline{Hom}(M,M) \neq 0.$ \newline
\underline{Second case:} $k$ is not a zero divisor in $\mathbb{Z}/n$ and, therefore, a unit.
We have $Ext^{2i-1}(M,M)\newline =ker(R_{ik,s}) \neq 0$, iff there is a path of length larger than or equal to $k-s$ in $e_0 J^s e_{ik}$. But, for $i=1,...,n$, the integers $ik$ are all different from one another mod $n$. This is why there surely is a path of length larger than or equal to $k-s$ in $e_0 J^s e_{ik}$ for some $i \leq n$. 
\end{proof}

Now we return to the proof of \hyperref[best result]{ \ref*{best result}}: \newline
Combining Case 1 and Case 2, we have proved the lemma\hyperref[Grenze1]{ \ref*{Grenze1}}. To get a proof of theorem \hyperref[best result]{ \ref*{best result}}, we use Mueller's theorem and the fact that the number of nonisomorphic indecomposable projective-injective modules equals the number of simple modules of $eAe$ to get that 
$$domdim(A)\leq \Delta_{eAe} +1 \leq 2s.$$ This finishes the proof of  \hyperref[best result]{ \ref*{best result}}. 

\end{proof}

\subsection{Dominant dimension of monomial algebras}
Recall that a \emph{monomial algebra} is a quiver algebra $KQ/I$ with admissible ideal $I$ generated by zero relations.
We will prove the bound of result 2 in this section:
\begin{theorem}
\label{Grenze2new}
Let $B$ be a non-selfinjective monomial algebra with $s$ projective-injective indecomposable modules. Then the dominant dimension of $B$ is bounded above by $2s$.
\end{theorem}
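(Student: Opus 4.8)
The plan is to reduce the monomial case to the Nakayama case already handled in Theorem~\ref{best result}. Concretely, if $B$ has dominant dimension $\leq 1$ there is nothing to prove since $s \geq 1$; so assume $\operatorname{domdim}(B) \geq 2$. By the Morita--Tachikawa correspondence $B \cong \operatorname{End}_C(M)$ where $C \cong eBe$ with $eB$ the minimal faithful projective-injective right module, and $C$ has exactly $s$ simple modules (one for each indecomposable projective-injective summand). By Theorem~\ref{best result} it suffices to show that $eBe$ is a Nakayama algebra; then $\operatorname{domdim}(B) \leq \Delta_{eBe} + 1 \leq 2s$ follows immediately.

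So the core of the argument is the structural claim: \emph{if $B$ is monomial and $eB$ is the minimal faithful projective-injective module, then $eBe$ is a Nakayama algebra}. I would prove this by showing every indecomposable projective $eBe$-module is uniserial, working on both sides. The key observation is that an indecomposable projective-injective $B$-module $P = e_i B$ (with $e_i$ a primitive summand of $e$) is, as a vector space with basis the paths out of $i$ that survive in $B$, tightly constrained: because $P$ is projective over a monomial algebra its radical series is governed by paths, and because $P$ is also injective its socle is simple, which for a monomial algebra forces the paths starting at $i$ to form a single chain — i.e. $P$ is uniserial. Dually, using that $eB$ faithful projective-injective implies (by QF-3 theory / the characterization of minimal faithful modules) that $Be$ is the corresponding minimal faithful left module and each $B e_i$ is projective-injective, each $Be_i$ is uniserial as a left $B$-module by the same socle-simple argument applied on the left. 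Then one checks that the Peirce pieces $e_i B e_j$ are at most one-dimensional (this is where uniseriality of both $e_iB$ and $B e_j$ is used), so the idempotent-truncated algebra $eBe$ has all indecomposable projectives uniserial, hence is Nakayama. A clean way to package the "at most one-dimensional" step is: a path $p$ from $i$ to $j$ that is nonzero in $B$ factors through the unique maximal chain of submodules of $e_iB$, and two such paths with the same source and target would give two distinct submodules of $e_iB$ of the same length, contradicting uniseriality.

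I expect the main obstacle to be the structural claim, specifically justifying that injectivity of $e_iB$ in a monomial algebra forces $e_iB$ to be uniserial. For Nakayama algebras this is built in; for general monomial algebras one must argue that $\operatorname{soc}(e_iB)$ simple plus "submodule lattice generated by residual paths" forces a single chain. I would argue contrapositively: if $e_iB$ is not uniserial, its radical (or some radical power) has a non-simple, non-local top or a non-simple socle, and a monomial algebra lets one read off from two incomparable basis paths either that the socle is not simple (contradicting injectivity, since injective indecomposables have simple socle) or that $e_i B$ has a non-uniserial direct summand structure that is incompatible with $e_i$ being primitive. Once uniseriality of $e_iB$ and of $Be_j$ is secured, the passage to $eBe$ being Nakayama and the final inequality $\operatorname{domdim}(B) \leq 2s$ are formal, invoking Theorem~\ref{best result} and Mueller's theorem as already done there.
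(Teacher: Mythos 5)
Your overall route is the paper's: reduce, via the Morita--Tachikawa correspondence and Mueller's theorem, to the structural claim that $eBe$ is a Nakayama algebra, and then quote Theorem \ref{best result}; your argument for the right-hand half of that claim is fine and is essentially the paper's Proposition \ref{eBe is Nak} (in a monomial algebra an indecomposable projective with simple socle has its nonzero paths forming a single chain, so each $e_iB$ is uniserial). The genuine gap is in your dual step. It is not true that $Be$ is the minimal faithful projective-injective left module, nor that the $Be_i$ are projective-injective left modules: the minimal faithful left module is $D(eB)$, a direct sum of left projectives supported at the \emph{socle vertices} of the $e_iB$, and these idempotents differ from the $e_i$ in general. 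Already for the path algebra of $1\to 2$ (a monomial, even LNakayama, algebra satisfying the hypotheses) one has $e=e_1$, while $Be_1\cong S_1$ is neither injective nor even faithful; the minimal faithful left module is $Be_2\cong D(e_1B)$. So the premise from which you deduce uniseriality of the $Be_i$ is false, and the left-hand control on $eBe$ is unproven. This is not cosmetic: uniseriality of all right projectives alone does not make an algebra Nakayama, and your intermediate assertion that $\dim e_iBe_j\leq 1$ for all $i,j$ already forces $eBe$ to be Nakayama is also wrong as stated (the path algebra of $1\to 2\leftarrow 3$ satisfies both conditions and is not Nakayama).

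What is actually needed is a bound on the arrows \emph{into} each vertex of the quiver of $eBe$, and proving it requires an ingredient your argument never uses, namely the faithfulness of $eB$. One repair: if $p,q$ are nonzero paths ending at a vertex $i$ in the support of $e$ and representing two distinct arrows of the quiver of $eBe$, faithfulness lets you prolong each backwards to a nonzero path starting in the support of $e$; prolonging forwards along the unique chain of paths out of $i$ and using, at the socle vertex $u$ of $e_xB$, the isomorphism $e_xB\cong D(Be_u)$ (so that every nonzero path ending at $u$ is a suffix of the maximal path out of $x$) forces one of the two paths to be a suffix of the other, hence to factor through the support of $e$, a contradiction. Alternatively one can pass to $B^{op}$ (still monomial), whose minimal faithful projective-injective module is $D(eB)=Bf$ with $f$ in general different from $e$, and then invoke the classical QF-3 isomorphism $fBf\cong eBe$ to translate the one-arrow-out statement for $(fBf)^{op}$ into the one-arrow-in statement for $eBe$; this is how the paper's brief remark ``dually or by going over to the opposite algebra'' has to be read. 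Without an argument of one of these kinds, your proof of the key structural claim, and hence of the theorem, is incomplete.
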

Note that $s$ in the previous theorem is always bounded by $n-1$, when $n$ denotes the number of simple $B$-modules.
We will also show in the next section that there is a Nakayama algebra such that the maximal value $2(n-1)$ (if $B$ has $n$ simple modules) is attained. Therefore the maximal possible value of the dominant dimension of a monomial algebra with $n$ simple modules is $2(n-1)$.

\begin{proposition}\label{eBe is Nak}
Let $A$ be a non-selfinjective monomial algebra with minimal faithful projective-injective module $eA$, then $eAe$ is a Nakayama algebra.

\end{proposition}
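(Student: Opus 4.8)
The plan is to exploit the structure theory of minimal faithful projective-injective modules over monomial algebras and to produce, for the idempotent algebra $eAe$, a set of generators of the radical whose ``uniserial'' behaviour forces $eAe$ to be Nakayama. Recall that over a monomial algebra $A = KQ/I$ the indecomposable projective $e_v A$ has a basis of paths starting at $v$ that are not in $I$, and each submodule is spanned by the paths of length $\geq k$ for some $k$; moreover, $e_v A$ is projective-injective precisely when it has simple socle and this socle is also the socle of its injective hull, which for monomial algebras can be characterised combinatorially (see the ``Structure of indecomposable injective modules'' statement in the Nakayama case; the monomial generalisation is the relevant input). First I would set $eA = \bigoplus_{v \in E} e_v A$ with $E$ the set of vertices at which the indecomposable projective is also injective, and I would analyse the structure of each $e_v A$ for $v \in E$: the key claim is that for such $v$, $e_v A$ is in fact \emph{uniserial}, i.e.\ $\operatorname{rad}(e_v A)$ has a unique arrow out of $v$ surviving in $A/I$ and the path along which $e_v A$ ``extends'' is unique. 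This should follow because an indecomposable projective-injective module over $KQ/I$ with $I$ monomial has simple top and simple socle, and a monomial algebra whose projective has simple top and simple socle is forced to be uniserial (two distinct arrows out of $v$ would give two distinct paths of maximal length, contradicting simplicity of the socle, since in a monomial algebra the socle is spanned by the maximal-length surviving paths).

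Next I would compute $eAe = \operatorname{Hom}_A(eA, eA)$ explicitly. Writing $f_v$ for the primitive idempotent of $eAe$ corresponding to $v \in E$, a homomorphism $e_v A \to e_w A$ is given, up to scalar, by right multiplication by a path $p$ from $w$ to $v$ in $Q$ not killed in $I$ (so that $p \cdot e_v A \subseteq e_w A$ makes sense after transpose/using the module structure); since each $e_v A$ ($v \in E$) is uniserial, the space $f_w (eAe) f_v = \operatorname{Hom}_A(e_v A, e_w A)$ is at most one-dimensional in each ``degree'', and the composite structure is governed by concatenation of these connecting paths. Concretely I expect to show: the quiver of $eAe$ has vertex set $E$, and between any two vertices there is at most one arrow, arranged either in a line or in a single cycle, because the uniserial structure of each $e_v A$ ($v \in E$) picks out a unique ``next'' projective-injective vertex one reaches by following the unique surviving path until the next element of $E$ is encountered at the relevant socle layer. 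This gives that the quiver of $eAe$ is either a line $A_m$ or a single oriented cycle $\widetilde{A}_{m-1}$.

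Finally I would verify that $eAe$ is Nakayama, i.e.\ that every indecomposable projective left \emph{and} right $eAe$-module is uniserial. Having established that the quiver of $eAe$ is a line or a cycle, it remains to check the relations: an algebra with line or cycle quiver is automatically Nakayama as soon as one knows the relations are ``monomial-like'' along the unique path, which here follows because $f_w(eAe)f_v$ being at most one-dimensional per degree forces $\operatorname{rad}^k(eAe)/\operatorname{rad}^{k+1}(eAe)$ to be a direct sum of simples each with multiplicity one in each projective. For the left-module condition one runs the dual argument using $Ae \cong D(eA')$-type considerations, or simply observes that $eAe$ having line/cycle quiver with these dimension bounds is left-uniserial as well by symmetry of the quiver shape. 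The main obstacle, and the step I would spend the most care on, is the first one: proving that each indecomposable projective-injective $e_v A$ over a monomial algebra is uniserial, and pinning down exactly which path structure realises the $\operatorname{Hom}$-spaces between them, since this is where the monomial hypothesis (as opposed to general QF-3) is genuinely used — without it $eAe$ need not be Nakayama — and it is also where one must be careful that the combinatorics of paths in $Q \setminus I$ really do assemble into a single line or cycle rather than a more complicated tree or multi-cycle configuration.
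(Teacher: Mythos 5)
Your first step is correct, and it is in substance a sharpening of what the paper does: over a monomial algebra the socle of $e_vA$ is spanned by the non-extendable nonzero paths starting at $v$ (not quite ``maximal-length'' paths, but this does not affect your argument), so injectivity forces a unique such path $m_v$, every nonzero path from $v$ is a prefix of $m_v$, and $e_vA$ is uniserial. From this, any nonzero path from $v$ to a vertex of $E$ is a prefix of $m_v$, every such prefix except the shortest one factors through an intermediate $E$-vertex and hence lies in $\rad^2(eAe)$, so each vertex of the quiver of $eAe$ has at most one \emph{outgoing} arrow. The paper obtains exactly this out-degree bound, arguing by contradiction with the simple socle instead of stating uniseriality explicitly.

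The gap is the assertion that ``this gives that the quiver of $eAe$ is a line or a single oriented cycle''. Out-degree at most one does not imply this: two branches can merge, i.e.\ a vertex can have two incoming arrows, and such an algebra is not Nakayama. You also need $\dim\bigl(\rad(eAe_i)/\rad^2(eAe_i)\bigr)\leq 1$ for every $i\in E$ (in-degree at most one, equivalently the indecomposable projective \emph{left} $eAe$-modules are uniserial), and neither of your two suggestions delivers it: ``by symmetry of the quiver shape'' is circular, since the shape is precisely what remains to be proved, and ``$Ae\cong D(eA)$-type considerations'' are not available, because $Ae$ is in general neither injective nor isomorphic to $D(eA)$; the indecomposable projective-injective left modules are the $D(e_vA)\cong Ae_{t_v}$, where $t_v$ is the socle vertex of $e_vA$, and these usually sit at vertices outside $E$. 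Two ways to close the gap: (a) the paper's route: pass to the opposite algebra, which is again monomial and has minimal faithful projective-injective module $D(eA)=\bigoplus_{v\in E}D(e_vA)$, run your out-degree argument there, and transport the conclusion back via $\End_A(D(eA))\cong \End_A(eA)^{op}\cong (eAe)^{op}$; or (b) argue directly: injectivity of $e_jA$ says exactly that $D(e_jA)\cong Ae_{t_j}$ is uniserial, i.e.\ every nonzero path ending at $t_j$ is a suffix of $m_j$; if two distinct arrows of the quiver of $eAe$ ended at $i\in E$, represented by paths $p$ from $j_1\in E$ and $q$ from $j_2\in E$, then writing $m_{j_1}=pc_1$ and $m_{j_2}=qc_2$ with $c_1,c_2$ prefixes of $m_i$ (say $c_1$ the shorter one), the path $qc_1$ is a nonzero prefix of $m_{j_2}$ ending at $t_{j_1}$, hence a suffix of $m_{j_1}=pc_1$, which forces either $p=q$ or $q$ to be a proper suffix of $p$, so that $p$ factors through $j_2\in E$ and lies in $\rad^2(eAe)$ --- a contradiction either way. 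With (a) or (b) supplied, the rest of your outline goes through; as written, the left-module half is missing rather than a formality.
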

\begin{proof}
Let $e=e_1+e_2+...+e_r$ be a decomposition of the idempotent $e$ into primitive orthogonal idempotents $e_i$.
To show that $eAe$ is a Nakayama algebra, it is enough to show that its quiver is a directed line or a directed circle, which equivalently can be formulated as $dim(rad(e_iAe)/rad^{2}(e_iAe))=0$ or $=1$ for every $i$ and dually $dim(rad(eAe_i)/rad^{2}(eAe_i))=0$ or $=1$. We show $dim(rad(e_iAe)/rad^{2}(e_iAe))=0$ or $=1$ in the following, while the dual property can be proven dually or by going over to the opposite algebra (which, of course, is still monomial). Since $eA$ is injective, all modules $e_i A$ are injective and thus have a simple socle.
Note that $rad(eAe)=eJe$ and thus $rad(e_iAe)=e_iJe$ and $rad^{2}(e_iAe)=e_iJeJe$.
Assume that the dimension of $e_iJe/e_iJeJe$ is at least two for some $i$. Since $A$ is monomial and $e_iA$ has a simple socle, there can be only one arrow $\alpha$ starting at $e_i$. The target of $\alpha$ can not be an idempotent of the form $e_s$ for $1 \leq s \leq r$, or else  $e_iJe/e_iJeJe$ would be at most one-dimensional.
Thus there exists two paths $r_1=\alpha p_1$ and $r_2= \alpha p_2$ of smallest length starting at $e_i$ and going to $e_x$ and $e_y$ respectively, where $e_xA$ and $e_yA$ are summands of $eA$.
But this contradicts the fact that the socle of $e_iA$ is simple and $A$ being monomial, since there is no commutativity relation so that there are two different socle elements having the paths $r_1$ and $r_2$ as factors. Thus $dim(rad(e_iAe)/rad^{2}(e_iAe))=0$ or $=1$ and $eAe$ has to be a Nakayama algebra.
\end{proof}

We come now to the proof of \ref{Grenze2new}:
\begin{proof}
Assume now that $B$ is a monomial algebra and additionally assume that $B$ has dominant dimension at least one, since there is nothing to show otherwise. Thus there is an idempotent $e$ such that $eB$ is minimal faithful projective-injective.
Therefore, \ref{Grenze2new} follows from \hyperref[best result]{\ref*{best result}} and the theorem of Mueller, since $A:=eBe$ is a Nakayama algebra with $s$ simple modules and $\Delta_A \leq 2s-1$. So we have by Mueller's theorem: domdim($B$) = $\Delta_A +1 \leq 2s$. 
\end{proof}

\begin{corollary} \label{Grenze2}
Let $B$ be a non-selfinjective Nakayama algebra with $n$ simple modules. Then the dominant dimension of $B$ is bounded by $2n-2$.
\end{corollary}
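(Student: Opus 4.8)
The plan is to use that every Nakayama algebra is a monomial algebra, invoke Theorem~\ref{Grenze2new}, and then bound the number of projective-injective indecomposable $B$-modules by $n-1$.

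Concretely, let $s$ be the number of pairwise non-isomorphic projective-injective indecomposable right $B$-modules. Since a Nakayama algebra is in particular monomial, Theorem~\ref{Grenze2new} applies directly and gives $\text{domdim}(B) \leq 2s$. Alternatively, in the interesting case $\text{domdim}(B) \geq 2$ one may instead use Proposition~\ref{eBe is Nak} to see that $eBe$ is a Nakayama algebra with exactly $s$ simple modules, where $eB$ denotes the minimal faithful projective-injective module, and then combine Theorem~\ref{best result} with the theorem of Mueller; when $\text{domdim}(B) \leq 1$ there is nothing to prove since $2n-2 \geq 0$.

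It therefore remains to show $s \leq n-1$, which is the only step that requires an argument and the only place where non-selfinjectivity of $B$ is used. The $n$ indecomposable projective right $B$-modules are $e_0B, e_1B, \dots, e_{n-1}B$; if all of them were injective, then $B_B$, being their direct sum, would be injective, i.e. $B$ would be self-injective, contrary to assumption. Hence $s \leq n-1$, and combining with the above we obtain $\text{domdim}(B) \leq 2s \leq 2(n-1) = 2n-2$. One can equally well argue combinatorially: by the criterion recalled earlier, $e_iB$ is injective iff $c_{i-1} \leq c_i$, so if every $e_iB$ were injective the $c_i$ would all be equal, which for a Nakayama algebra means $B$ is self-injective. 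I do not expect any real obstacle here — the substantive work lies in Theorem~\ref{Grenze2new} (equivalently, in the Mainlemma and Theorem~\ref{best result}), and this corollary is essentially a bookkeeping consequence.
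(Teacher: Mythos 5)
Your proposal is correct and matches the paper's own argument: the paper also deduces the corollary directly from Theorem~\ref{Grenze2new} since every Nakayama algebra is monomial, with the bound $s \leq n-1$ noted (in the remark following Theorem~\ref{Grenze2new}) exactly for the reason you give, namely that a non-selfinjective algebra cannot have all indecomposable projectives injective. Your write-up just makes this bookkeeping step explicit.
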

\begin{proof}
This follows from \ref{Grenze2new}, since any Nakayama algebra is a monomial algebra.
\end{proof}
This corrects and proves a conjecture of Abrar, who conjectured that the maximal value is $2n-3$ (see \cite{Abr} Conjecture 4.3.21). In section 3, we show that $2n-2$ is the optimal bound.

\subsection{Finitistic dominant dimension of Nakayama algebras}
Using the main lemma, we show in this section that we can give a bound of the finitistic dominant dimension for Nakayama algebras.
\begin{definition}
The \textit{finitistic dominant dimension} of a finite dimensional algebra $A$ is 
$$\text{fdomdim}(A):=\text{sup} \{ \text{domdim}(M) | \text{domdim}(M) < \infty \}$$
\end{definition}
\begin{example}
If $A$ has global dimension $g$, then fdomdim($A) \leq g$, since for every noninjective module $M$  domdim($M) \leq $injdim($M) \leq g$ holds.
\end{example}
The following theorem gives again the bound $2n-2$ for the dominant dimension of Nakayama algebras.
\begin{theorem}
\label{findom}
Let $A$ be a non-selfinjective Nakayama algebra with $n \geq 2$ simple modules. Then fdomdim($A) \leq 2n-2$.
\end{theorem}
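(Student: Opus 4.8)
The plan is to prove the (formally stronger) statement that \emph{every} module $M$ with $\operatorname{domdim}(M)<\infty$ satisfies $\operatorname{domdim}(M)\le 2n-2$, by reducing to the indecomposable case and then feeding a well-chosen simple module into the Mainlemma.

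First I would reduce to indecomposable modules. The minimal injective coresolution of a direct sum is the direct sum of the minimal injective coresolutions of the summands, so $\operatorname{domdim}(X\oplus Y)=\min\{\operatorname{domdim}(X),\operatorname{domdim}(Y)\}$; hence if $M=\bigoplus_i M_i$ (with the $M_i$ indecomposable) has finite dominant dimension, then $\operatorname{domdim}(M_{i_0})=\operatorname{domdim}(M)<\infty$ for some index $i_0$. Thus it suffices to bound $\operatorname{domdim}(M)$ when $M$ is indecomposable with finite dominant dimension. If $\operatorname{domdim}(M)\in\{0,\infty\}$ there is nothing to prove, since $2n-2\ge 2$, so assume $1\le d:=\operatorname{domdim}(M)<\infty$; in particular $M$ is not injective. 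Since every indecomposable module over a Nakayama algebra is uniserial, each cosyzygy $\Omega^{-i}(M)$ (which is again indecomposable) has simple socle, so in the minimal injective coresolution $(I_i)_{i\ge 0}$ of $M$ every term $I_i=D(Ae_{q_i})$ is an indecomposable injective with $\soc(I_i)=S_{q_i}$ (compare Proposition \ref{minpro3}), and $I_d\ne 0$.

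The key observation is the following. By the definition of $d$, the modules $I_0,\dots,I_{d-1}$ are projective while $I_d$ is not; since a direct summand of a projective module is projective and indecomposable injectives are classified by their socles, this forces $q_l\ne q_d$ for all $l=0,\dots,d-1$. Put $S:=S_{q_d}=\soc(I_d)$. Then $S$ is a submodule of $I_d$, but of no $I_l$ with $l<d$; by the lemma relating the $\Ext$-groups of a module with the terms of its minimal injective coresolution (the one cited from \cite{Ben}), this means precisely that $\Ext^d(S,M)\ne 0$ while $\Ext^l(S,M)=0$ for $l=1,\dots,d-1$. Hence $d=\inf\{s\ge 1\mid \Ext^s(S,M)\ne 0\}$.

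Finally I would apply part (2) of the Mainlemma with $N:=M$ and the simple module $S$: since $\Ext^d(S,M)\ne 0$ with $d\ge 1$, it gives $\inf\{s\ge 1\mid \Ext^s(S,M)\ne 0\}\le 2n-2$, that is, $d\le 2n-2$, which is the assertion. I do not expect a genuine obstacle: all of the homological content is already isolated in the Mainlemma, and the only points requiring care are the behaviour of the dominant dimension under direct sums (for the reduction to indecomposable modules) and the elementary observation that the non-projectivity of $I_d$ together with the projectivity of $I_0,\dots,I_{d-1}$ forces $\soc(I_d)$ to be a simple module that does not occur as the socle of any earlier term of the coresolution — which is exactly what makes the infimum above equal $d$, so that the Mainlemma applies and yields the bound.
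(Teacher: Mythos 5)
Your proposal is correct and follows essentially the same route as the paper: reduce to an indecomposable module $M$ with $1\le \operatorname{domdim}(M)<\infty$, identify $\operatorname{domdim}(M)$ as the first degree $s\ge 1$ with $\Ext^{s}(S,M)\neq 0$ for a simple $S$ occurring in the socle of the first non-projective term of the minimal injective coresolution, and then invoke part (2) of the Mainlemma. The only cosmetic difference is that the paper first disposes of LNakayama algebras via $\operatorname{fdomdim}(A)\le \operatorname{gldim}(A)\le n-1$ and argues for CNakayama algebras, whereas you apply the Mainlemma (as stated, for all non-selfinjective Nakayama algebras) uniformly, with a slightly more explicit justification that the infimum equals $\operatorname{domdim}(M)$.
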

\begin{proof}
Clearly we can assume that $A$ is a CNakayama algebra, since an LNakayama algebra has global dimension at most $n-1$. So assume now that $A$ is a CNakayama algebra and $M$ an indecomposable $A$-module with finite dominant dimension. Note that domdim($M)= \inf \{ i | Ext^{i}(S,M) \neq 0$ for a simple module $S$ with nonprojective injective hull$\}$.
We can assume that $M$ has dominant dimension larger than or equal to 1.
Let $S$ be a simple module with nonprojective injective envelope such that $Ext^{i}(S,M) \neq 0$ for an $i \geq 1$.
Then by the main lemma $\inf \{ s \geq 1 | Ext^{s}(S,N) \neq 0 \} \leq 2n-2$ and thus domdim($M) \leq 2n-2$.
\end{proof}
\begin{example} \label{examplefindomdim}
Take the CNakayama algebra $A$ with Kupisch series $(3s+1,3s+2,3s+2), s \geq 1$. We first calculate the Gorenstein dimension and the dominant dimension of $A$ and then the finitistic dominant dimension of $A$.
First note that $e_1 A \cong D(A e_2)$ is injective. Also $e_2 A \cong D(A e_0)$ is injective. 
The only noninjective indecomposable projective module is then $e_0 A$ and the only nonprojective injective indecomposable module is $D(Ae_1)$. We have the following injective coresolution: 
$$0 \rightarrow e_0 A \rightarrow D(A e_0) \rightarrow  D(A e_2) \rightarrow D(A e_1) \rightarrow 0.$$
Thus the dominant dimension and the Gorenstein dimension of $A$ are both equal to 2.
Now take an indecomposable module $M=e_aA/e_aJ^k$ and calculate the minimal injective presentation of $M$ using the method from \hyperref[minpro]{ \ref*{minpro}}: $0 \rightarrow M \rightarrow D(A e_{a+k-1}) \rightarrow D(A e_{a-1})$.
Thus $M$ has dominant dimension larger than or equal to 2 iff $a+k-1 \in \{0,2 \}$ mod $3$ and $a-1 \in \{0,2 \}$ mod $3$ iff ($a=0$ mod $3$ and $k \in \{0,1 \}$ mod $3$) or ($a=1$ mod $3$ and $ k \in \{0,2 \}$ mod $3$).
The following table gives the relevant values of the dominant dimensions: \newline
\begin{center}
\begin{tabular}{l | {c}r}
a             & 0 & 1 \\
\hline
$k \equiv_3 0$ & 4 & 2  \\
$k \equiv_3 1$            & 2 & -  \\
$k \equiv_3 2$           & - & 3  \\
\end{tabular}
\end{center}
Thus the finitistic dominant dimension equals 4, while the finitistic dimension equals the Gorenstein dimension which is 2. This example also shows that in general the finitistic dominant dimension will be larger than the dominant dimension of a Nakayama algebra.
\end{example}

\section{Nakayama algebras which are Morita algebras and their dominant and Gorenstein dimension}
In this section we calculate the dominant dimension of all Nakayama algebras that are Morita algebras and give the promised example of a non-selfinjective Nakayama algebra having $n$ simple modules and dominant dimension $2n-2$. We also show how to calculate the Gorenstein dimension of such algebras and give a surprising interpretation of the dominant and Gorenstein dimension for gendo-symmetric Nakayama algebras.
\subsection{Calculating the dominant dimensions of Nakayama algebras that are Morita algebras}
\begin{definition}
A finite dimensional algebra $B$ is called a Morita algebra, if it is isomorphic to the endomorphism ring of a module $M$, which is a generator of a selfinjective algebra $A$ (see \cite{KerYam}).
If $A$ is even symmetric, then $B$ is called a gendo-symmetric algebra (see \cite{FanKoe} and \cite{Mar2} for other characterisations).
\end{definition}

The following is a special case of a result of Yamagata in \cite{Yam2}. 
\begin{Theorem(Yamagata)}
Let $A$ be a nonsemisimple selfinjective Nakayama algebra with Loewy length $w$ and  $M=\bigoplus\limits_{i=0}^{n-1}{e_i A} \oplus \bigoplus\limits_{i=1}^{r}{e_{x_i} A / e_{x_i} J^{k_i}}$. Then $B:=End_A (M)$ is a basic non-selfinjective Nakayama algebra, iff all the $x_i$ are pairwise different and $k_i = w-1$ for all $i \in \{ 1,...,r \}$ and $r \geq 1$.
\end{Theorem(Yamagata)}

Keep this notation for $B$ and call points of the form $x_i$ special points.

\begin{proposition}
Let $r \geq 1$.
Let $A$ be a selfinjective Nakayama algebra with Loewy length $w$ and $n$ simple modules. Let
$M=\bigoplus\limits_{i=0}^{n-1}{e_i A} \oplus \bigoplus\limits_{i=1}^{r}{e_{x_i} A / e_{x_i} J^{w-1}}$ with the $x_i$ different for all $i \in \{ 1,...,r \}$. Then:
\begin{align*}
domdim(B)&=\phi_M +1\\
&=\inf \{ k \geq 1 | \exists x_i , x_j : Ext^{k}(e_{x_i} A / e_{x_i} J^{w-1}, e_{x_j} A / e_{x_j} J^{w-1}) \neq 0 \}+1\\
&=\inf \{ k \geq 1| \exists x_i , x_j : x_j +w-1 \equiv_n x_i +[\frac{k+1}{2}]w-g_k  \}+1.
\end{align*}
Here, we set $g_k=1$, if $k$ is even, and $g_k=0$, if $k$ is odd. $[l]$ is equal to $l$, if $l$ is an integer, and otherwise equal to the smallest integer larger than $l$ (for example $[1.5]=2$).
\end{proposition}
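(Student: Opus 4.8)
The plan is to establish the three equalities in turn, using the results already assembled in Section~2.

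First, the equality $\mathrm{domdim}(B)=\phi_M+1$ is immediate from Mueller's theorem (\hyperref[Mueller]{Theorem of Mueller}): since $A$ is selfinjective, $M$ is automatically a cogenerator, and it is a generator by construction, so $M$ is a CoGen and $B=\mathrm{End}_A(M)$ has dominant dimension $\phi_M+1$. For the second equality, I would unwind the definition $\phi_M=\inf\{r\ge 1\mid \mathrm{Ext}^r_A(M,M)\ne 0\}$. Writing $M=\bigoplus_{i=0}^{n-1}e_iA\oplus\bigoplus_{i=1}^{r}e_{x_i}A/e_{x_i}J^{w-1}$, the projective summands $e_iA$ contribute nothing to $\mathrm{Ext}^r$ for $r\ge 1$, and $\mathrm{Ext}^r_A(e_{x_i}A/e_{x_i}J^{w-1},e_jA)$ vanishes for $r\ge 1$ as well since $A$ is selfinjective (injective dimension of the second argument is $0$, and actually one uses that the relevant syzygies are non-projective so no map into $A$ survives). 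Hence the only summands that can produce a nonzero higher Ext are the non-projective uniserial modules $e_{x_i}A/e_{x_i}J^{w-1}$, giving the second line.

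The heart of the argument is the third equality, which is a syzygy computation. Here I would use the explicit minimal projective resolution of an indecomposable module over a selfinjective Nakayama algebra with Loewy length $w$ recorded in Section~2.1: for $M_s:=e_xA/e_xJ^{w-1}$ (so $s=w-1$ in the notation $e_0J^s$ there, after a shift of vertex), one has $\Omega^{2t}(M_s)=e_{x+tw}J^{\,w-1}$ and $\Omega^{2t+1}(M_s)=e_{x+tw+(w-1)}J^{\,1}$, using that all $l_r$ equal $w$. Dualizing via the Ext--Hom computation from Section~2.1, $\mathrm{Ext}^k_A(M_s,M_{s'})\ne 0$ iff the appropriate right-multiplication map $R$ has nontrivial kernel/cokernel, which happens precisely when the relevant $\mathrm{Hom}$-space $e_{x'}J^{w-1}e_{(\,\cdot\,)}$ is nonzero, i.e. when there exists a path of the correct length, i.e. when a certain congruence mod $n$ holds. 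Tracking the vertex of the $k$-th term of the resolution gives $x+[\tfrac{k+1}{2}]w-g_k$ (the floor/ceiling bookkeeping accounts for odd versus even $k$, and $g_k$ for the extra step of length $w-1$ versus the full $w$), and nonvanishing of the Ext then reads $x_j+w-1\equiv_n x_i+[\tfrac{k+1}{2}]w-g_k$. Taking the infimum over all pairs $x_i,x_j$ and adding $1$ gives the claimed formula.

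The main obstacle I expect is the careful index bookkeeping in the third equality: getting the vertex label of the $k$-th projective in the resolution exactly right as a function of the parity of $k$, correctly matching it against the socle (equivalently, the generator after a shift) of the target module $M_{s'}=e_{x_j}A/e_{x_j}J^{w-1}$, and verifying that the condition "there is a path of length $\ge w-s$" from the Section~2.1 computation translates into the stated congruence with the $[\cdot]$ and $g_k$ corrections — rather than, say, an off-by-one in $w$ versus $w-1$. I would handle this by treating $k$ odd and $k$ even as two explicit subcases (mirroring Case~1/Case~2 style of the earlier proofs), checking the base cases $k=1,2$ by hand, and then arguing the general pattern by the successive-syzygy recursion $\Omega^{-1}$ / $\Omega$ already justified in Propositions~\ref{minpro2} and~\ref{minpro3}. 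Everything else is a routine consequence of Mueller's theorem and the selfinjectivity of $A$.
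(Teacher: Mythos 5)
Your first two equalities are handled just as in the paper: Mueller's theorem gives $\mathrm{domdim}(B)=\phi_M+1$, and since each $e_iA$ is projective and (by selfinjectivity of $A$) injective, only the summands $e_{x_i}A/e_{x_i}J^{w-1}$ can contribute to $\Ext^k(M,M)$ for $k\geq 1$. The gap is in your treatment of the third equality. The identification you start from is wrong: $e_xA/e_xJ^{w-1}$ is uniserial of length $w-1$, hence isomorphic to $e_{x-1}J^{1}$ (the case $s=1$ of the Section 2.1 setup), whereas $e_xJ^{w-1}$ is the simple socle $S_{x+w-1}$; these two modules differ by one application of $\Omega$, not by a "shift of vertex". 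Consequently your displayed formulas $\Omega^{2t}(M_s)=e_{x+tw}J^{w-1}$ and $\Omega^{2t+1}(M_s)=e_{x+tw+(w-1)}J^{1}$ are the syzygies of $\Omega(e_xA/e_xJ^{w-1})$, not of $e_xA/e_xJ^{w-1}$ itself (already $t=0$ fails for $w>2$), so taken literally your bookkeeping is shifted by one homological degree and would attach the congruence $x_j+w-1\equiv_n x_i+[\frac{k+1}{2}]w-g_k$ to the wrong exponent $k$. The paper deals with exactly this point by an explicit step you omit: since $A$ is selfinjective, $\Omega$ is a stable equivalence, so $\Ext^{k}(e_{x_i}A/e_{x_i}J^{w-1},\,e_{x_j}A/e_{x_j}J^{w-1})\cong\Ext^{k}(e_{x_i}J^{w-1},\,e_{x_j}J^{w-1})$ for $k\geq 1$, and after this shift both arguments are simple.

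The second problem is your nonvanishing criterion: you assert that $\Ext^k$ is nonzero "iff the relevant Hom-space is nonzero, i.e. when there exists a path of the correct length", but the Section 2.1 computation you appeal to shows this is not automatic — there, $\Ext^{2i-1}(M,M)\neq 0$ requires a path of length at least $k-s$ \emph{inside} $e_0J^se_{ik}$, which is strictly stronger than that Hom-space being nonzero. The reason the condition collapses to a clean congruence in the proposition is precisely that, after the $\Omega$-shift, both modules are simple, so one can invoke the cited lemma from Benson: $\Ext^{k}(N,S)\neq 0$ iff $S$ is a quotient of the $k$-th term $P_k$ of the minimal projective resolution of $N$; for $N=e_{x_i}J^{w-1}\cong S_{x_i+w-1}$ one computes $P_k=e_{x_i+[\frac{k+1}{2}]w-g_k}A$, and matching its top against $S_{x_j+w-1}$ gives exactly the stated congruence. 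Once you insert the stable-equivalence shift and this simple-module criterion, the remaining parity and base-case checks you propose reproduce the paper's argument; without them, the derivation as written does not go through.
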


\begin{proof}
Note that the first equality is by Mueller's theorem and we just have to show the last equality.
Lemma 1.3.7 says that for a module $M$ and a simple module $S$ $Ext^{i}(M,S) \neq 0$ iff S is a direct summand of the top of the module $P_i$, where $P_i$ is the $i$-th term in a minimal projective resolution of $M$.
Note that since $\Omega^{1}$ is a stable equivalence:
$$Ext^{k}(e_{x_i} A / e_{x_i} J^{w-1}, e_{x_j} A / e_{x_j} J^{w-1})= Ext^{k}(e_{x_i} J^{w-1} ,e_{x_j} J^{w-1}),$$
which is what we want to calculate. \newline
Observe that $e_{x_i} J^{w-1} \cong S_{x_i +w-1}$ is a simple module.
The minimal projective resolution of $e_{x_i} J^{w-1}$ then looks like this: \newline
\begin{tiny}
\xymatrix{
\cdots \ar[r] & e_{x_i+(l+1)w+w-1}A \ar[r] & e_{x_i+(l+1)w}A \ar[r] & e_{x_i+lw+w-1} A \ar[r] &  e_{x_i+lw}A                \ar@{->} `r/8pt[d] `/10pt[l] `^dl[llll]|{} `^r/3pt[dll] [dllll] \\
\cdots \ar[r] & e_{x_i+w} A \ar[r] & e_{x_i+w-1} A \ar[r] & e_{x_i} J^{w-1} \ar[r] & 0 }
\end{tiny} 
\newline \newline
Thus the $k$-th term in the minimal projective resolution of $e_{x_i}J^{w-1}$ is equal to
$$P_k= e_{x_i +[\frac{k+1}{2}]w-g_k }A.$$
Then $Ext^{k}(e_{x_i} A / e_{x_i} J^{w-1}, e_{x_j} A / e_{x_j} J^{w-1}) \neq 0$, iff $x_j +w-1 \equiv_n x_i +[\frac{k+1}{2}]w-g_k$, for a $k \geq 1$. 
\end{proof}

\begin{corollary}
\label{formgendo}
Let $A$ be a symmetric Nakayama algebra with Loewy length $w \equiv_n 1$ and $n$ simple modules. Let
$M=\bigoplus\limits_{i=0}^{n-1}{e_i A} \oplus \bigoplus\limits_{i=1}^{r}{e_{x_i} A / e_{x_i} J^{w-1}}$ with the $x_i$ different for all $i \in \{ 1,...,r \}$. Then for $B=End_A(M):$
\begin{align*}
\text{domdim}(B)&= 2\inf \{ s \geq 1 \mid \exists i,j: x_i + s  \equiv_n x_j  \}
\end{align*}
So the dominant dimension is just twice the (directed) graph theoretical minimal distance of two special points which appear in $M$.
\end{corollary}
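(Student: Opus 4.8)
The plan is to read this off from the preceding proposition after feeding in the two special hypotheses, namely that $A$ is symmetric (not merely selfinjective) and that $w \equiv_n 1$. First I would note that for a symmetric Nakayama algebra every indecomposable projective, left or right, has length equal to the Loewy length $w$, so $l_l(x) = l_r(x) = w$ for every vertex $x$; thus the preceding proposition applies directly and gives
$$\text{domdim}(B) = \inf\bigl\{\, k \geq 1 \mid \exists\, x_i, x_j:\ x_j + w - 1 \equiv_n x_i + [\tfrac{k+1}{2}] w - g_k \,\bigr\} + 1.$$

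Next I would use $w \equiv_n 1$. Modulo $n$ this replaces $w-1$ by $0$ and $[\tfrac{k+1}{2}] w$ by $[\tfrac{k+1}{2}]$, so the congruence defining the infimum becomes $x_j \equiv_n x_i + h(k)$ with $h(k) := [\tfrac{k+1}{2}] - g_k$. A short case distinction on the parity of $k$, using the convention that $[\,\cdot\,]$ rounds non-integers up, shows $h(2s-1) = h(2s) = s$ for every $s \geq 1$; so $h$ is weakly increasing, attains every positive integer, and first reaches the value $s$ at $k = 2s-1$.

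Finally I would compare with $s_0 := \inf\{\, s \geq 1 \mid \exists\, i,j:\ x_i + s \equiv_n x_j \,\}$, the minimal directed distance between two special points. For $k \leq 2s_0 - 2$ one has $h(k) \leq s_0 - 1$, and no value $< s_0$ is realized by a pair of special points, by minimality of $s_0$; at $k = 2s_0 - 1$ the realized value $h(k) = s_0$ occurs. Hence the infimum above equals $2s_0 - 1$, and $\text{domdim}(B) = (2s_0 - 1) + 1 = 2 s_0$, which is the claimed formula.

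The computation is essentially bookkeeping, so I do not expect a real obstacle; the two places that need care are the evaluation of $h(k)$, where the non-integer behaviour of the bracket $[\,\cdot\,]$ and the value of $g_k$ must be combined correctly in the even and odd cases, and the minimality step, where one must check that the values of $h$ lying below $s_0$ fail to occur because of the definition of $s_0$ and not merely by accident. I would also record that the standing hypotheses implicitly include $r \geq 1$, so that, by Yamagata's theorem, $B$ really is a non-selfinjective Nakayama algebra; this forces $s_0$ to be finite, e.g. $s_0 \leq n$ on taking $i = j$.
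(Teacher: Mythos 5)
Your proposal is correct and follows essentially the same route as the paper: both specialize the preceding proposition to $w \equiv_n 1$, observe that $[\tfrac{k+1}{2}]-g_k$ takes the same value $s$ at $k=2s-1$ and $k=2s$, and conclude that the infimum is attained at the odd index $2s_0-1$, giving $\text{domdim}(B)=2s_0$. Your extra remarks (monotonicity of $h$, the minimality check, and $r\geq 1$ guaranteeing finiteness) just make explicit what the paper leaves implicit.
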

\begin{proof}

The formula takes for $w \equiv_n 1$ an especially nice form:
$$\text{domdim}(B) =  \inf \{ k \geq 1 \mid \exists i,j: x_i + [\frac{k+1}{2}]-g_k \equiv_n x_j  \}+1.$$
For $k=2s+1$ and $k=2s+2$ the value of $[\frac{k+1}{2}]-g_k$ is the same.
This means that the infimum is attained at an odd number of the form $k=2s-1$ and the formula simplifies to
\begin{align*}
\text{domdim}(B) &=  \inf \{ 2s-1 \geq 1 \mid \exists i,j: x_i + s  \equiv_n x_j  \}+1\\
&=2\inf \{ s \geq 1 \mid \exists i,j: x_i + s  \equiv_n x_j  \}.
\end{align*}
\end{proof}

We can now state the corrected conjecture of Abrar as the next proposition by showing that the bound is optimal:

\begin{proposition}
\label{strictstrict}
A non-selfinjective Nakayama algebra with $n \geq 2$ simple modules has its dominant dimension bounded above by $2n-2$ and this bound is optimal for every $n\geq 2$, that is, there exists a non-selfinjective Nakayama algebra with $n$ simple modules and dominant dimension $2n-2$. This Nakayama algebra has $n-1$ injective-projective indecomposable modules.
\end{proposition}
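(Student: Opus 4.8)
The upper bound $\mathrm{domdim}(A)\le 2n-2$ is already established in Corollary~\ref{Grenze2}, so the only thing left to prove in Proposition~\ref{strictstrict} is the optimality: for each $n\ge2$ I must exhibit a non-selfinjective Nakayama algebra with $n$ simple modules and dominant dimension exactly $2n-2$, and check that it has $n-1$ projective-injective indecomposables. The natural source of such an example is Corollary~\ref{formgendo}, which computes the dominant dimension of a gendo-symmetric Nakayama algebra $B=\mathrm{End}_A(M)$ as twice the minimal directed distance (mod $n$) between two special points $x_i$. So the strategy is: choose the underlying symmetric Nakayama algebra $A$ and the special points so that this minimal distance is as large as possible.

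First I would take $A$ to be a symmetric Nakayama algebra with $n$ simple modules and Loewy length $w\equiv_n 1$ (for instance $w=n+1$, i.e.\ Kupisch series $(n+1,\dots,n+1)$), so that Corollary~\ref{formgendo} applies and gives $\mathrm{domdim}(B)=2\inf\{s\ge1\mid \exists i,j:\ x_i+s\equiv_n x_j\}$. To make the infimum large I must pick as few special points as possible while forcing the minimal cyclic gap between them to be maximal: with $r=2$ special points, say $x_1=0$ and $x_2=1$, the set of achievable differences is $\{1,n-1\}$, so $\inf\{s\ge1\mid \exists i,j:\ x_i+s\equiv_n x_j\}=1$ — that is the wrong direction. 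I want the two special points to be maximally far apart in the directed sense, which for two points on a cycle of length $n$ means the smaller of the two directed distances is as large as it can be; but with only two points one direction always gives distance $1$ unless... — so in fact I should re-read: the infimum is over \emph{all} ordered pairs, so having the points adjacent is bad. The correct choice is $r=1$: a single special point $x_1$. Then the only difference available is $x_1+s\equiv_n x_1$, forcing $s\equiv_n 0$, so the minimal positive such $s$ is $s=n$, giving $\mathrm{domdim}(B)=2n$. That overshoots the bound $2n-2$, which signals that the $r=1$ case must be excluded or behaves differently (indeed with $r=1$ one should double-check whether $B$ is genuinely non-selfinjective and whether $\phi_M$ is finite); more carefully, the honest computation with $r=1$ via the Proposition preceding Corollary~\ref{formgendo} is needed, and the $2n-2$ bound tells us the answer there must be $2n-2$, not $2n$ — the discrepancy is exactly the off-by-one in the formula's endpoint, so I expect $\mathrm{domdim}(B)=2(n-1)$ in that case after the careful $[\tfrac{k+1}{2}]-g_k$ bookkeeping. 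So: \textbf{take $r=1$, one special point}, $A$ symmetric Nakayama with $n$ simples and $w\equiv_n 1$, and $M=\bigoplus_{i=0}^{n-1}e_iA\ \oplus\ e_{x_1}A/e_{x_1}J^{w-1}$.

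Concretely the steps are: (1) fix $A$ with Kupisch series $(n+1,\dots,n+1)$ (symmetric, $w=n+1\equiv_n1$); (2) set $M$ as above with $r=1$; (3) invoke Yamagata's theorem (the \emph{Theorem(Yamagata)} in the excerpt) to conclude $B=\mathrm{End}_A(M)$ is a basic non-selfinjective Nakayama algebra with $n$ simple modules (it has $n$ indecomposable summands in $M$, hence $n$ simples); (4) apply the Proposition computing $\mathrm{domdim}(B)=\inf\{k\ge1\mid x_1+[\tfrac{k+1}{2}]w-g_k\equiv_n x_1\}+1$, i.e.\ $\inf\{k\ge1\mid [\tfrac{k+1}{2}]w\equiv_n g_k\}+1$, and using $w\equiv_n1$ this becomes $\inf\{k\ge1\mid [\tfrac{k+1}{2}]\equiv_n g_k\}+1$; running through $k=1,2,3,\dots$ the quantity $[\tfrac{k+1}{2}]-g_k$ takes values $1,0,1,1,2,2,3,3,\dots$ after $k=2$ — wait, the $k=2$ term gives $[\tfrac32]-1=2-1=1\not\equiv_n0$ for $n\ge2$, and the genuinely smallest $k$ with the congruence holding is when $[\tfrac{k+1}{2}]-g_k\equiv_n0$ with the value equal to $n$ (not $0$, since $k\ge1$ forces $[\tfrac{k+1}{2}]\ge1$), which first happens at $[\tfrac{k+1}{2}]=n,\ g_k=0$, i.e.\ $k$ odd with $\tfrac{k+1}{2}=n$, so $k=2n-1$, giving $\mathrm{domdim}(B)=2n$?? — so I must instead allow the value to be $n$ with $g_k=1$: $[\tfrac{k+1}{2}]-1=n$ needs $[\tfrac{k+1}{2}]=n+1$, $k$ even, $\tfrac{k+2}{2}=n+1$, $k=2n$; still overshoots. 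The resolution of this apparent contradiction with Corollary~\ref{Grenze2} is the crux and the main obstacle: I expect that for $r=1$ the algebra $B$ has infinite dominant dimension or the formula degenerates, so \textbf{optimality must be achieved with $r\ge2$ after all}, choosing $x_1,x_2$ so that \emph{both} directed gaps are controlled — the only way to get minimal gap $n-1$ with two points is impossible, so the true optimal example likely uses the \emph{Proposition} (not the corollary) with $w\not\equiv_n1$, or a non-gendo-symmetric Morita-Nakayama algebra.

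\textbf{Revised plan.} I would abandon the gendo-symmetric shortcut and work directly with the general formula: by the Proposition, $\mathrm{domdim}(B)=\inf\{k\ge1\mid \exists i,j:\ x_j+w-1\equiv_n x_i+[\tfrac{k+1}{2}]w-g_k\}+1$. I would take $r=1$, $x_1=0$, and choose $w$ with $\gcd(w,n)=1$ so that the residues $[\tfrac{k+1}{2}]w \bmod n$ sweep out all of $\mathbb Z/n$ as $k$ grows. The condition becomes $w-1\equiv_n [\tfrac{k+1}{2}]w-g_k$. For odd $k=2s-1$: $[\tfrac{k+1}{2}]=s$, $g_k=0$, condition $sw\equiv_n w-1$, i.e.\ $(s-1)w\equiv_n -1$. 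For even $k=2s$: $[\tfrac{k+1}{2}]=s+1$ — no, $[\tfrac{2s+1}{2}]=s+1$? $\tfrac{2s+1}{2}=s+0.5$, rounds up to $s+1$; hmm but for $k=2$, $s=1$: $[\tfrac32]=2=s+1$ ✓. Then $g_k=1$: $(s+1)w-1\equiv_n w-1$, i.e.\ $sw\equiv_n 0$, impossible when $\gcd(w,n)=1$ and $1\le s<n$. So the minimal $k$ is odd, $k=2s-1$ with $s$ minimal solving $(s-1)w\equiv_n -1$, i.e.\ $s-1\equiv_n -w^{-1}$; choosing $w\equiv_n -1$ (possible since then $w\equiv_n n-1$, compatible with selfinjective Nakayama with $n$ simples as long as $w\ge2$, e.g.\ $w=n-1$ for $n\ge3$, or $w=2n-1$) gives $w^{-1}\equiv_n -1$, so $s-1\equiv_n 1$, $s=2$, $k=3$, $\mathrm{domdim}(B)=4$ — too small. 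I want $s-1\equiv_n n-1$, i.e.\ $s=n$ minimal, which needs $-w^{-1}\equiv_n n-1$, i.e.\ $w^{-1}\equiv_n 1$, i.e.\ $w\equiv_n1$ — back to the degenerate case. So the honest conclusion, and the step I flag as the genuine difficulty, is that \textbf{one cannot reach $2n-2$ with a single special point and a clean $w$}: the optimal example instead combines a carefully chosen Loewy length $w$ \emph{together with} the position of one special point relative to $0$, using $x_j\ne x_i$ only trivially ($i=j$) but exploiting the $w-1$ shift; solving $(s-1)w\equiv_n -1$ for the \emph{largest} minimal $s$ over all valid $w$ with $\gcd(w,n)=1$, the answer is $s=n-1$ (take $w^{-1}\equiv_n -(n-2)^{-1}$... ), giving $k=2n-3$ and $\mathrm{domdim}(B)=2n-2$. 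I would then verify this $B$ has $n$ simples and $n-1$ projective-injectives (the $n-1$ summands $e_iA$ of $M$ that remain projective-injective after passing to $\mathrm{End}_A(M)$, the special point contributing the unique non-injective projective), completing the proof. The main obstacle is precisely pinning down which $(w,n)$ makes the minimal $s$ equal to $n-1$ and confirming non-selfinjectivity of the resulting $B$ via Yamagata's criterion.
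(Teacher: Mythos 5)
There is a genuine gap, and it stems from a single counting error that derails your whole argument. If $A$ is a selfinjective Nakayama algebra with $n$ simple modules and $M=\bigoplus_{i=0}^{n-1}e_iA\oplus\bigoplus_{i=1}^{r}e_{x_i}A/e_{x_i}J^{w-1}$, then $M$ has $n+r$ pairwise nonisomorphic indecomposable summands, so the basic algebra $B=\End_A(M)$ has $n+r$ simple modules, not $n$ as you assert in step (3). With your first choice ($A$ with $n$ simples, $r=1$, $w\equiv_n 1$) the formula of \ref{formgendo} does give $\mathrm{domdim}(B)=2n$, but $B$ then has $n+1$ simple modules and $2n=2(n+1)-2$, so there is no contradiction with \ref{Grenze2} and no reason to abandon the construction -- on the contrary, it is already the optimal example, just indexed wrongly. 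Because you perceived a contradiction, you discarded it and switched to the ``revised plan,'' which does not prove the statement: it still uses a base algebra with $n$ simples and one special point, so the resulting algebra has $n+1$ simple modules, and a dominant dimension of $2n-2<2(n+1)-2$ would not be optimal for that number of simples; moreover the decisive choice of $w$ is left as an ellipsis, and the congruence your computation actually requires (minimal $s$ with $(s-1)w\equiv_n-1$ equal to $n-1$ forces $2w\equiv_n 1$) is solvable only for odd $n$, so the sketch does not even cover all $n\ge 2$.

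The paper's proof is exactly your original construction, shifted by one: take a symmetric Nakayama algebra $A$ with $n-1$ simple modules and Loewy length $w\equiv_{n-1}1$, and set $C=\End_A(A\oplus e_0A/e_0J^{w-1})$, i.e.\ $r=1$ with a single special point. By Yamagata's theorem $C$ is a basic non-selfinjective Nakayama algebra with $(n-1)+1=n$ simple modules; its projective-injective indecomposables are the $n-1$ modules $\Hom_A(A\oplus e_0A/e_0J^{w-1},\,e_iA)$; and \ref{formgendo} gives $\mathrm{domdim}(C)=2\inf\{s\ge 1\mid x_1+s\equiv_{n-1}x_1\}=2(n-1)=2n-2$. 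So the one-special-point case is not degenerate at all -- it realizes the bound -- provided the underlying symmetric Nakayama algebra has $n-1$, not $n$, simple modules.
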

\begin{proof}
The bound was given in Theorem \hyperref[Grenze2]{ \ref*{Grenze2}}.
Using the previous corollary, we take the algebra $C=End_A(A \oplus e_0A/e_0J^{w-1})$, where $A$ is a symmetric Nakayama algebra with $n-1$ simple modules and Loewy length $w$. Then $C$ is a Nakayama algebra with $n$ simple modules and dominant dimension $2n-2$.
\end{proof}

We give another application, showing how to construct algebras of arbitrary dominant dimension larger or equal to two:
\begin{corollary}
Let $w \equiv_n 2$ and $B$ as above.
Then $B$ has dominant dimension \newline $domdim(B)=\inf \{ k \geq 1| \exists x_i , x_j : x_j \equiv_n x_i+k  \}+1.$
\end{corollary}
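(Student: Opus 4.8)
The plan is to read off the formula directly from the preceding Proposition by specialising the Loewy length to the congruence class $w\equiv_n 2$ and then simplifying the two parity families of congruence conditions that occur inside the infimum. No new ideas are needed; the whole argument is a bookkeeping exercise with the ceiling symbol $[\cdot]$ and the parity correction $g_k$.

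Concretely, I would start from
$$domdim(B)=\inf \{ k \geq 1 \mid \exists x_i,x_j:\ x_j+w-1 \equiv_n x_i+[\tfrac{k+1}{2}]w-g_k \}+1,$$
where $g_k=1$ if $k$ is even and $g_k=0$ if $k$ is odd. Since $w\equiv_n 2$ we have $w-1\equiv_n 1$, so the left-hand side of the congruence equals $x_j+1$ for every choice of special points, and it remains to rewrite the right-hand side. Here the hard part (such as it is) will be to handle the even and odd cases correctly, so I would split accordingly. For odd $k$, write $k=2s-1$; then $\frac{k+1}{2}=s$ is an integer, hence $[\frac{k+1}{2}]=s$ and $g_k=0$, and $sw\equiv_n 2s$, so the condition becomes $x_j+1\equiv_n x_i+2s$, i.e. $x_j\equiv_n x_i+(2s-1)=x_i+k$. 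For even $k$, write $k=2s$; then $\frac{k+1}{2}=s+\frac12$, hence $[\frac{k+1}{2}]=s+1$ and $g_k=1$, and $(s+1)w-1\equiv_n 2(s+1)-1=2s+1$, so the condition becomes $x_j+1\equiv_n x_i+2s+1$, i.e. $x_j\equiv_n x_i+2s=x_i+k$.

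Thus in both parities the condition appearing in the infimum is simply the existence of special points $x_i,x_j$ with $x_j\equiv_n x_i+k$, and substituting this back yields $domdim(B)=\inf\{k\geq 1\mid \exists x_i,x_j:\ x_j\equiv_n x_i+k\}+1$, as claimed. The one subtlety worth flagging explicitly is that, in contrast with the $w\equiv_n 1$ situation of Corollary \ref{formgendo}, consecutive values of $k$ now impose genuinely different conditions, so the infimum cannot be restricted to odd $k$ and must be left over all $k\geq 1$; beyond that, the proof is a routine verification and I do not anticipate any real obstacle.
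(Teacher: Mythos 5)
Your proposal is correct and is essentially the paper's own argument: you specialise the formula of the preceding proposition to $w \equiv_n 2$ and simplify, the only cosmetic difference being that you verify the identity $[\frac{k+1}{2}]w - g_k \equiv_n k+1$ by splitting into the even and odd cases of $k$, whereas the paper states it in one line. Your remark that the infimum must run over all $k \geq 1$ (unlike the $w \equiv_n 1$ case) is also accurate.
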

\begin{proof}
This follows, since in case $w \equiv_n 2$: \newline
$[\frac{k+1}{2}]w-g_k=k+1$ and therefore: \newline
$domdim(B)=\inf \{ k \geq 1| \exists x_i , x_j : x_j +w-1 \equiv_n x_i +[\frac{k+1}{2}]w-g_k  \}+1= \newline
\inf \{ k \geq 1| \exists x_i , x_j : x_j +1 \equiv_n x_i +k+1  \}+1= \newline
\inf \{ k \geq 1| \exists x_i , x_j : x_j \equiv_n x_i +k \}+1.$
\end{proof}
So in this case the dominant dimension is simply equal to one plus the minimal distance of two special points. Like this, one can construct a family of Nakayama algebras with dominant dimension an arbitrary number larger than or equal to two.

\subsection{Gorenstein dimensions of Nakayama algebras which are Morita algebras}
We first recall definitions and standard facts about approximations. Note that by maps we always mean $A$-homomorphisms, when we speak about modules.
\begin{definition}
Let $M$ and $N$ be $A$-modules. 
Recall that a map $g:M \rightarrow N$ is called right minimal in case $gh=g$ implies that $h$ is an isomorphism for any map $h:M \rightarrow M$.
A map $f:M_0 \rightarrow X$, with $M_0 \in$ add($M$), is called a right add($M$)-approximation of $X$ iff the induced map $Hom(N,M_0) \rightarrow Hom(N,X)$ is surjective for every $N \in$ add($M$).
Note that in case $M$ is a generator, such an $f$ must be surjective.
When $f$ is a right minimal homomorphism, we call it a minimal right add($M$)-approximation.
Note that minimal right add($M$)-approximations always exist for finite dimensional algebras.
Given a right minimal map $X$: $f: M_0 \rightarrow X$ with kernel $T$, one obtains the following short exact sequence:
$$0 \rightarrow T \rightarrow M_0 \rightarrow X \rightarrow 0.$$
Applying the functor $Hom(Z,-)$ for some $Z \in add(M)$, one obtains the sequence:
$$0 \rightarrow Hom(Z,T) \rightarrow Hom(Z,M_0) \rightarrow Hom(Z,X) \rightarrow Ext^{1}(Z,T)$$
and thus the induced map $Hom(Z,M_0) \rightarrow Hom(Z,X)$ is surjective in case $Ext^{1}(Z,T)=0$. 
The kernel of such a minimal right add($M$)-approximationl $f$ is denoted by $\Omega_M(X)$. Inductively we define $\Omega_M^{0}(X):=X$ and $\Omega_M^{n}(X):=\Omega_M(\Omega_M^{n-1}(X))$.
The add($M$)-resolution dimension of a module $X$ is a defined as:
$$M\text{-resdim}(X):=\inf \{ n \geq 0 | \Omega_M^{n}(X) \in \text{add}(M) \}.$$
\end{definition}
We use the following Proposition 3.11. from \cite{CheKoe} in order to calculate the Gorenstein dimensions:
\begin{proposition}
\label{CheKoetheorem}
Let $A$ be a finite dimensional algebra and $M$ a CoGen of mod-$A$ and define $B:=End_A(M)$.
Let $B$ have dominant dimension $z+2$, with $z \geq 0$.
Then, for the right injective dimension of $B$ the following holds:
$$\text{injdim}(B)=z+2\ +\ M\text{-resdim}(\tau_{z+1}(M)\oplus D(A)).$$
Here we use the common notation $\tau_{z+1}=\tau\Omega^{z}$, introduced by Iyama (see \cite{Iya}).
\end{proposition}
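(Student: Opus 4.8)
This proposition is quoted from \cite{CheKoe}, so the paper just cites it; here is the proof I would give. The plan is to strip the projective-injective part off a minimal injective coresolution $0\to B\to I^0\to I^1\to\cdots$ of $B_B$ and to reinterpret what is left via $\add M$-approximations over $A$. First I would observe that $\operatorname{domdim}(B)=z+2$ means precisely that $I^0,\dots,I^{z+1}$ are projective while $I^{z+2}$ is not, and that $\operatorname{domdim}(B)\le\operatorname{injdim}(B)$ whenever $\operatorname{injdim}(B)$ is finite (if $\operatorname{injdim}(B)=d$ and $I^d$ were projective, the surjection $I^{d-1}\to I^d$ would split and yield a strictly shorter injective coresolution). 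Hence the cosyzygy $C:=\Omega_B^{-(z+2)}(B)$ is nonzero, and as $I^0,\dots,I^{z+1}$ are injective, $\operatorname{injdim}(B)=(z+2)+\operatorname{injdim}_B(C)$ in $\mathbb{N}\cup\{\infty\}$. So it remains to show $\operatorname{injdim}_B(C)=M\text{-resdim}_A(\tau_{z+1}(M)\oplus D(A))$.

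Next I would set up the dictionary relating $B$-homological data to $\add M$-approximation data over $A$. The functor $G:=\Hom_A(M,-)\colon\mod A\to\mod B$ restricts to an equivalence $\add M\simeq\proj B$, sends $D(A)$ (which lies in $\add M$ because $M$ is a cogenerator) to the basic projective-injective $B$-module, and takes a minimal right $\add M$-approximation $0\to\Omega_M(X)\to M_0\to X\to 0$ to the beginning $G(M_0)\to G(X)\to 0$ of a minimal projective presentation over $B$; iterating gives $\operatorname{projdim}_B G(X)=M\text{-resdim}_A(X)$ for every $A$-module $X$. Applying the $K$-duality and running the same formalism over $A^{op}$ --- where $\End_{A^{op}}(D(M))\cong B^{op}$ and $D(M)$ is again a generator-cogenerator --- I would get the companion statement identifying injective dimensions over $B$ with $\add M$-(co)resolution dimensions over $A$. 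This relative Morita--Tachikawa formalism, in the spirit of Auslander and Auslander--Platzeck--Todorov, is the technical core.

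Then I would run the minimal injective coresolution of $B_B=\Hom_A(M,M)$ through this dictionary. By the theorem of Mueller, $\operatorname{domdim}(B)=z+2$ is equivalent to $\phi_M=z+1$, i.e.\ $\Ext^i_A(M,M)=0$ for $1\le i\le z$ and $\Ext^{z+1}_A(M,M)\neq 0$; by Auslander--Reiten duality the module $\tau_{z+1}(M)=\tau\Omega^z(M)$ is exactly what detects this first nonvanishing self-extension of $M$, and it is nonzero (were $\Omega^z(M)$ projective, $\Ext^{z+1}_A(M,M)$ would vanish). Chasing the cokernels one step past the projective-injective block of length $z+2$ should then identify $C$ with the image, on the injective side of the dictionary, of $\tau_{z+1}(M)\oplus D(A)$: the summand $\tau_{z+1}(M)$ carries that first self-extension, while $D(A)\subseteq M$ supplies exactly the injective, non-projective directions of $B$ appearing in the envelope $I^{z+2}$. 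Together with the reduction in the first step this gives $\operatorname{injdim}(B)=(z+2)+M\text{-resdim}_A(\tau_{z+1}(M)\oplus D(A))$.

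The hard part will be precisely this last identification, together with the bookkeeping inside the dictionary: I would have to trace the minimal injective coresolution term by term through the functors and verify that at step $z+2$ the cokernel really is the image of $\tau_{z+1}(M)\oplus D(A)$ with no residual projective-injective summand, which demands careful control of the minimality of the various approximations and of the $(B,A)$-bimodule structure of $M$, and a clean matching of the Nakayama functors of $A$ and $B$ so that ``injective dimension over $B$'' turns into ``$\add M$-resolution dimension over $A$''. The remainder is standard Morita--Tachikawa machinery.
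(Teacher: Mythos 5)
First, a point of comparison: the paper does not prove this statement at all --- it is imported verbatim as Proposition 3.11 of \cite{CheKoe}, so a citation is the ``paper's proof''. Judged as a standalone argument, your proposal is a plan rather than a proof, and the gap sits exactly at the mathematical core. The steps you do carry out are the routine ones: $\operatorname{domdim}(B)\leq \operatorname{injdim}(B)$ when the latter is finite, stripping the first $z+2$ (projective-)injective terms so that $\operatorname{injdim}(B)=(z+2)+\operatorname{injdim}_B(\Omega_B^{-(z+2)}(B))$, and the standard dictionary $\operatorname{projdim}_B\Hom_A(M,X)=M\text{-resdim}_A(X)$ coming from the equivalence $\add(M)\simeq \operatorname{proj}B$. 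What remains --- identifying the cosyzygy $\Omega_B^{-(z+2)}(B)$, on the other side of the dictionary, with data built precisely from $\tau_{z+1}(M)\oplus D(A)$, with no residual projective-injective summand --- is stated with ``should then identify'' and then explicitly deferred as ``the hard part''. But that identification \emph{is} the proposition: it is where the hypothesis $\operatorname{domdim}(B)=z+2$ (equivalently $\Ext^i_A(M,M)=0$ for $1\le i\le z$ and $\Ext^{z+1}_A(M,M)\neq 0$, by Mueller) must be used quantitatively, and where the Auslander--Reiten translate actually enters via the AR formula, not merely as a heuristic for ``detecting the first self-extension''.

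There is also a structural mismatch you would have to repair before the chase can even start. Your dictionary, $\Hom_A(M,-)$ turning minimal right $\add(M)$-approximations into minimal projective presentations, converts the \emph{right} $M$-resolution dimension appearing in the statement into a \emph{projective} dimension of right $B$-modules; the quantity you need is the \emph{injective} dimension of $B_B$. Bridging these requires passing to $\operatorname{projdim}$ of $D(B)$ on the opposite side (or, as you gesture, to $B^{op}\cong\End_{A^{op}}(D(M))$ and left approximations), and then verifying that under this duality the module whose resolution governs the tail is again the right $A$-module $\tau\Omega^{z}(M)\oplus D(A)$ of the statement --- i.e.\ that $\tau$ and $\Omega$ over $A$ match the corresponding constructions over $A^{op}$ through $D$ and the Nakayama functor. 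None of this bookkeeping is done, and it is not ``standard Morita--Tachikawa machinery'' that can be waved through: it is precisely the content of Chen--Koenig's argument. So the proposal is an honest outline with the decisive step missing, which is why the paper (reasonably) cites \cite{CheKoe} instead of reproving it.
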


We note that the Gorenstein symmetry conjecture (which says that the injective dimensions of $A$ and $A^{op}$ are the same) is known to hold for algebras with finite finitistic dimension (see \cite{ARS} page 410, conjecture 13), and thus for Nakayama algebras which are our main examples. Therefore, we will only look at the right injective dimension at such examples.
\ \newline
We now fix our notation as in the previous section: $A$ is a selfinjective Nakyama algebra with $n$ simple modules, Loewy length $w$ and $M=\bigoplus\limits_{i=0}^{n-1}{e_i A} \oplus \bigoplus\limits_{i=1}^{r}{e_{x_i} A / e_{x_i} J^{w-1}}$.
Using the same notation as in the above theorem, we note that $B$ is derived equivalent to $C=End_{A}(A \oplus N)$ (see \cite{HuXi} Corollary 1.3. (2)), with the semisimple module $N=\Omega^{1}(M)=\bigoplus\limits_{i=1}^{r}{e_{x_i}J^{w-1}}$. We also set $W:=A \oplus N$ and we fix that notation for the rest of this section.
\begin{lemma}
The above mentioned derived equivalence between $B$ and $C$ preserves dominant dimension and Gorenstein dimension (We refer to \cite{ChMar}, corollary 4.3. for a more general statement).
\end{lemma}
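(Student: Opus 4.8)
The plan is to show that the derived equivalence constructed between $B = \End_A(M)$ and $C = \End_A(W)$, where $W = A \oplus N$ with $N = \Omega^1(M)$, respects both the dominant dimension and the Gorenstein dimension. First I would recall that the derived equivalence in question comes from Hu--Xi's construction in \cite{HuXi}, which realizes $B$ and $C$ as endomorphism rings of the modules $M$ and $\Omega^1(M) \oplus A$ (up to adding $A$ as a summand), and that such equivalences arise from a tilting complex. The key point is that this equivalence is of a special type: it restricts nicely on the subcategory of projective-injective modules, so that it identifies the minimal faithful projective-injective modules on both sides and commutes with taking (co)syzygies up to a shift.

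The main step is to invoke a general principle, which is exactly the content of \cite{ChMar}, Corollary 4.3: a derived equivalence that sends projectives to projectives and injectives to injectives (or more precisely, one that is compatible with the relevant recollement/tilting data) preserves the dominant dimension, since the dominant dimension can be read off from the minimal injective coresolution of the regular module together with the projective-injective summands, and these data are intrinsic to the derived category together with the standard $t$-structure and its heart. For the Gorenstein dimension, I would use that the injective dimension of the regular module equals the projective dimension of $D(A)$, and that finiteness and the precise value of such homological dimensions are, for the algebras in question, detectable from the derived category; alternatively, one uses that derived equivalences preserve the finiteness of the global/finitistic dimension and, in the presence of the Gorenstein symmetry that holds here (as noted in the excerpt, Nakayama algebras have finite finitistic dimension), the Gorenstein dimension is a derived invariant up to the controlled shift appearing in the Hu--Xi equivalence.

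Concretely, the steps in order would be: (1) identify the tilting complex giving the equivalence $D^b(B) \simeq D^b(C)$ and verify it is concentrated in two consecutive degrees, with the ``extra'' term supported on $\add(A)$, which is projective-injective on the selfinjective algebra $A$; (2) deduce that the equivalence sends the minimal faithful projective-injective $B$-module to the minimal faithful projective-injective $C$-module, hence $eBe \cong e'Ce'$ and in particular the associated idempotent subalgebras agree; (3) apply \cite{ChMar}, Corollary 4.3 (or reprove its relevant special case) to conclude $\operatorname{domdim}(B) = \operatorname{domdim}(C)$; (4) for the Gorenstein dimension, combine the derived invariance of the bounded derived category's structure with the formula $\operatorname{injdim}(B) = \operatorname{projdim}_B D(B)$ and the fact that $B, C$ are Gorenstein simultaneously with equal Gorenstein dimensions when the equivalence is standard and two-term.

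The hard part will be step (3)--(4): in full generality a derived equivalence need not preserve the dominant dimension (the projective-injective modules are not a derived invariant notion in isolation), so the argument must genuinely use the special shape of the Hu--Xi equivalence here — namely that it comes from a mutation at a projective-injective summand over a selfinjective algebra, so that the QF-3 structure is transported exactly. I expect that the cleanest route is simply to cite \cite{ChMar}, Corollary 4.3, whose hypotheses are met precisely because $W$ and $M$ differ by a syzygy on the selfinjective algebra $A$ (so all the relevant summands involved in the mutation are projective-injective), and to spell out only why those hypotheses hold in this situation rather than reproving the invariance from scratch.
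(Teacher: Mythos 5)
Your treatment of the dominant dimension is acceptable in spirit: the paper itself points to \cite{ChMar}, Corollary 4.3 as a more general statement, while its actual proof simply cites \cite{HuXi}, Corollary 1.2, which says that this particular kind of derived equivalence (coming from a generator over a selfinjective algebra, i.e.\ an almost $\nu$-stable one) preserves dominant dimension and finitistic dimension. You do not need your step (2) (the identification $eBe \cong e'Ce'$), which you assert but do not prove; the cited results already give the preservation of domdim directly.

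The genuine gap is in your step (4), the Gorenstein dimension. Your argument rests on the claim that $B$ and $C$ are ``Gorenstein simultaneously with equal Gorenstein dimensions when the equivalence is standard and two-term,'' but this is not a valid general principle: a derived equivalence preserves \emph{finiteness} of the injective dimension of the regular module, not its value, and even for a two-term tilting complex the injective dimensions can shift by one. You acknowledge the ``controlled shift'' yourself, but never show the shift is zero here, so your argument does not yield $\operatorname{injdim}(B)=\operatorname{injdim}(C)$. The missing idea, which is exactly how the paper closes this gap, is to route the comparison through the finitistic dimension: if the Gorenstein dimension is finite, it equals the finitistic dimension, and \cite{HuXi}, Corollary 1.2 shows that this special derived equivalence preserves the finitistic dimension on the nose (this is again \emph{not} a property of arbitrary derived equivalences); combined with the fact that finiteness of the Gorenstein dimension is preserved, one gets equality of the Gorenstein dimensions. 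Without some argument of this kind (or an explicit verification that the shift coming from the tilting complex vanishes), your step (4) does not go through.
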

\begin{proof}
In \cite{HuXi} Corollary 1.2., it is proved that such a kind of derived equivalence preserves dominant dimension and finitistic dimension. If the Gorenstein dimension is finite, it is equal to the finitistic dimension. Since a derived equivalence also preserves the finiteness of Gorenstein dimension, the result follows. 
\end{proof}

We see that we need to know how to calculate minimal right add($W$)-approximations of an arbitrary module in a selfinjective Nakayama algebra. For this we have the following lemma:

\begin{lemma}
Let $e_aJ^y$ be an arbitrary non-projective indecomposable module in the selfinjective Nakayama algebra $A$ and assume that this module is not contained in add($N$). 
\begin{enumerate}
\item If $a \neq x_i$ for all $i=1,...,r$, then the projective cover $e_{a+y}A \rightarrow e_aJ^y \rightarrow 0$ is a minimal right add($W$)-approximation of $e_aJ^y$. \newline
\item If there is an $x_i$ with $a=x_i$, then we have the following short exact sequence:
$$0\rightarrow e_{x_i+y}J^{w-(y+1)} \rightarrow e_{x_i+y}A \oplus e_{x_i}J^{w-1} \rightarrow e_{x_i} J^{y} \rightarrow 0.$$
Here, the map $e_{x_i+y}A \oplus e_{x_i}J^{w-1} \rightarrow e_{x_i} J^{y}$ is the sum of the projective cover of $e_{x_i} J^{y}$ and the socle inclusion of $e_{x_i}J^{w-1}$ in $e_{x_i} J^{y}$.
Then the surjective map in the above short exact sequence is a minimal right add($W$)-approximation.
\end{enumerate}
\end{lemma}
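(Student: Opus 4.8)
The plan is to verify both assertions by direct computation with the syzygy formula for selfinjective Nakayama algebras, namely $\Omega^{-1}(e_xJ^y)=e_{x+y-?}J^{?}$ — more precisely, for a selfinjective Nakayama algebra all $c_i$ equal $w$, so $\Omega(e_aJ^y)=e_{a+y}J^{w-y}$ and the projective cover of $e_aJ^y$ is $e_{a+y}A$. I would begin by recalling that $\add(W)=\add(A)\oplus\add(N)$ with $N=\bigoplus_i e_{x_i}J^{w-1}$, so a minimal right $\add(W)$-approximation of a module $X$ is obtained by taking a minimal right $\add(A)$-approximation (i.e. the projective cover, since $A$ is projective) together with whatever maps from the summands $e_{x_i}J^{w-1}$ of $N$ are needed to make $\Hom(e_{x_i}J^{w-1},-)$ surjective onto $\Hom(e_{x_i}J^{w-1},X)$; one then checks right minimality of the resulting map. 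The module $e_{x_i}J^{w-1}\cong S_{x_i+w-1}$ is simple, so $\Hom(e_{x_i}J^{w-1},X)\neq 0$ exactly when $S_{x_i+w-1}$ is a submodule of the uniserial module $X$, i.e. when $S_{x_i+w-1}=\soc(X)$.

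For part (1), where $a\neq x_i$ for all $i$: the socle of $e_aJ^y$ is $S_{a+w-1}$, and since $a\neq x_i$ for every $i$, none of the simples $S_{x_i+w-1}$ equals $S_{a+w-1}$ (here I use that the $x_i$ are pairwise distinct mod $n$, so the $x_i+w-1$ are distinct mod $n$ as well). Hence $\Hom(e_{x_i}J^{w-1},e_aJ^y)=0$ for all $i$, so no summand of $N$ contributes, and the minimal right $\add(W)$-approximation is just the minimal right $\add(A)$-approximation, which for the projective module $A$ is the projective cover $e_{a+y}A\to e_aJ^y\to 0$. Right minimality of a projective cover is standard. This disposes of (1).

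For part (2), where $a=x_i$ for some (necessarily unique) $i$: now $\soc(e_{x_i}J^y)=S_{x_i+w-1}$ does coincide with the top/socle situation making $\Hom(e_{x_i}J^{w-1},e_{x_i}J^y)\neq 0$ — indeed the unique (up to scalar) nonzero map is the socle inclusion $e_{x_i}J^{w-1}=\soc(e_{x_i}J^y)\hookrightarrow e_{x_i}J^y$ (using $y\le w-1$, and the hypothesis that $e_{x_i}J^y\notin\add(N)$ which rules out $y=w-1$, so $y<w-1$ and the inclusion is proper). So the candidate approximation is the sum map $\pi\oplus\iota\colon e_{x_i+y}A\oplus e_{x_i}J^{w-1}\to e_{x_i}J^y$ of the projective cover and the socle inclusion. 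I would then compute its kernel: the kernel is $\{(p,s): \pi(p)=-\iota(s)\}$; projecting to the first coordinate gives a submodule of $e_{x_i+y}A$ containing $\ker\pi=e_{x_i+y}J^{w-y}$ with the quotient accounting for the one-dimensional image of $\iota$, hence the kernel is uniserial of dimension $(w-y)+1-1=w-y-1$ starting at $x_i+y$ — wait, I would instead just compare dimensions: $\dim(e_{x_i+y}A)+\dim(e_{x_i}J^{w-1}) - \dim(e_{x_i}J^y)=w+1-(w-y)=y+1$, and the image has dimension $w-y$ (the map is surjective onto the $(w-y)$-dimensional module), so the kernel has dimension $w+1-(w-y)=y+1$; hmm, let me instead trust the stated sequence and verify it is a complex and exact by a dimension count $\dim e_{x_i+y}J^{w-(y+1)}+\dim e_{x_i}J^y = (w-y-1)+(w-y)=2w-2y-1$ against $\dim(e_{x_i+y}A\oplus e_{x_i}J^{w-1})=w+1$, which forces $w-y-1+\text{(stuff)}$... the cleanest route is: show the displayed sequence is exact by checking the left map is injective with image exactly the kernel of the sum map, which is a routine check on uniserial modules, and then verify right minimality.

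The main obstacle I anticipate is the verification of \emph{right minimality} of the sum map in case (2): one must show that any endomorphism $h$ of $e_{x_i+y}A\oplus e_{x_i}J^{w-1}$ with $(\pi\oplus\iota)\circ h=\pi\oplus\iota$ is an automorphism. Because the two summands have different tops ($S_{x_i+y}$ versus $S_{x_i+w-1}$ — these differ since $y<w-1$), $\Hom$ between the two summands is constrained, and the relevant endomorphism ring is computable; the key point is that the socle inclusion $\iota$ is \emph{not} in the image of $\pi$ composed with any radical endomorphism, so it genuinely contributes a new generator and cannot be absorbed — this is exactly what makes the approximation minimal and non-redundant. Once minimality is settled, the identification of $\Omega_W(e_{x_i}J^y)=e_{x_i+y}J^{w-(y+1)}$ is immediate from the exact sequence. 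I would also double-check the degenerate cases (e.g. $y=0$, where $e_{x_i}J^0=e_{x_i}A$ is projective and the statement's hypothesis excludes it, or $w-(y+1)=0$).
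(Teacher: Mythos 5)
Your strategy for the approximation property is correct but genuinely different from the paper's. Since the only non-projective summands of $W$ are the simple modules $e_{x_j}J^{w-1}\cong S_{x_j+w-1}$, you verify the lifting condition directly: in (1) by showing $\Hom(e_{x_j}J^{w-1},e_aJ^y)=0$ because the socle of the uniserial module $e_aJ^y$ is $S_{a+w-1}$ and $a\neq x_j$, and in (2) by observing that $\Hom(e_{x_i}J^{w-1},e_{x_i}J^y)$ is one-dimensional, spanned by the socle inclusion, which factors through the second summand of the approximation by construction. The paper instead verifies the sufficient criterion $\Ext^{1}(Z,\ker)=0$ for $Z\in\add(W)$, reading this off minimal injective copresentations of the kernels (in (1) via $I_1=e_aA$ and the hypothesis $a\neq x_i$, in (2) via $0\rightarrow e_{x_i+y}J^{w-(y+1)}\rightarrow e_{x_i+y}A\rightarrow e_{x_i-1}A$). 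Both are valid; your route is more elementary, while the paper's works directly with the kernel term, which it must identify anyway. On right minimality in (2) the paper only says it is obvious, so your sketch (the socle inclusion cannot be absorbed through $\pi$, since $\ker\pi=e_{x_i+y}J^{w-y}$ contains $\soc(e_{x_i+y}A)$) is on par with the paper and can be made precise by checking that $\ker(\pi\oplus\iota)$ contains no nonzero direct summand of $e_{x_i+y}A\oplus e_{x_i}J^{w-1}$.

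The one substantive step you did not carry out is the identification of the kernel in (2) as $e_{x_i+y}J^{w-(y+1)}$, which is precisely what is used later to compute $\Omega_W$; your dimension bookkeeping also goes astray there. Note $\dim e_xJ^m=w-m$, so $\dim e_{x_i+y}J^{w-(y+1)}=y+1$, not $w-y-1$; with the correct value your counts are consistent, since $(y+1)+(w-y)=w+1=\dim\bigl(e_{x_i+y}A\oplus e_{x_i}J^{w-1}\bigr)$. The paper settles this by writing down an explicit basis of the kernel and comparing dimension and socle. A clean way to finish along your lines: if $(p,s)$ lies in the kernel and $p=0$, then $\iota(s)=0$ forces $s=0$, so the kernel embeds into the uniserial module $e_{x_i+y}A$; being a submodule of dimension $y+1$ it must equal $e_{x_i+y}J^{w-(y+1)}$. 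With that inserted (and the minimality check made explicit), your argument is complete.
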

\begin{proof}
\begin{enumerate}
\item The projective cover is clearly minimal. The kernel of the projective cover is $e_{a+y}J^{w-y}$ and we have to show $Ext^{1}(Z,e_{a+y}J^{w-y})=0$ for every $Z \in\ $add($W$).
Since $W$ is a direct sum of simple and projective modules, this simply means that $I_1$ (the first term in a minimal injective coresolution of $e_{a+y}J^{w-y}$) has a socle, which does not lie in add($W$).
But this is true because of $I_1=e_aA$ and our assumption in i). \newline
\item Again, the minimality is obvious. At first we show that the short exact sequence exists. What is left to show is that the kernel is really $e_{x_i+y}J^{w-(y+1)}$.
With
$$e_{x_i}J^y \cong e_{x_i+y}A/e_{x_i+y}J^{w-y}$$
and
$$e_{x_i}J^{w-1} \cong e_{x_i+y}J^{w-y-1}/e_{x_i+y}J^{w-y}$$
we see that the map of interest has up to isomorphism the following form:
$$ f: e_{x_i+y}A \oplus e_{x_i+y}J^{w-y-1}/e_{x_i+y}J^{w-y} \rightarrow e_{x_i+y}A/e_{x_i+y}J^{w-y}.$$
We have $f(w_1,\overline{w_2})=\overline{w_1}+\overline{w_2}$, when $\overline{w}$ denotes the residue class of an element $w$. A basis of the kernel is thus given by the elements
$$\{ ( \phi_{x_i+y,l},0) \mid w-1 \geq l \geq w-y \} \cup \{(\phi_{x_i+y,w-y-1}, -\overline{\phi_{x_i+y,w-y-1})} \},$$
when we denote by $\phi_{c,d}$ the unique path starting at $c$ and having length $d$.

A basis of the socle of the kernel is given by $(\phi_{x_i+y,w-1},0)$ and thus the kernel is isomorphic to $e_{x_i+y}J^{w-(y+1)}$ (by comparing dimension and socle). We now have to show that the induced map $Hom(G,e_{x_i}J^{w-1} \oplus e_{x_i+y}A) \rightarrow Hom(G,e_{x_i}J^y)$ is surjective for every $G \in add(W)$. Note that we can assume that $G$ has no simple summands $S$ which are not isomorphic to $e_{x_i}J^{w-1}$, since we would have $Hom(S,e_{x_i}J^y)=0$ then. With this assumption we get
$$Ext^{1}(G,e_{x_i+y}J^{w-(y+1)}) = 0,\ \text{iff}\ Ext^{1}(e_{x_i}J^{w-1},e_{x_i+y}J^{w-(y+1)}) = 0,$$
and this is true, since the minimal injective presentation of $e_{x_i+y}J^{w-(y+1)}$ is the following:
$$0 \rightarrow e_{x_i+y}J^{w-(y+1)} \rightarrow e_{x_i+y} A \rightarrow e_{x_i-1}A.$$
Then $Ext^{1}(G,e_{x_i+y}J^{w-(y+1)}) = 0$ and thus, the induced map \newline $Hom(G,e_{x_i}J^{w-1} \oplus e_{x_i+y}A) \rightarrow Hom(G,e_{x_i}J^y)$ is surjective. 
\end{enumerate}
\end{proof}

Now we will use this result to calculate the Gorenstein dimensions of gendo-symmetric Nakayama algebras.
We note that for a simple module $S$, $\tau_{z+1}(S)$ is always a simple module, if the dominant dimension of $B$ is even. It is a radical of a projective indecomposable module, if the dominant dimension of $B$ is odd. So, in order to calculate the Gorenstein dimension, it is enough to calculate the minimal right add($W$)-resolutions for modules of the form $(a,w-1)$ and $(a,1)$ for a point $a$.
A diagram of the form
$$\xymatrix@1{  A' \ar@{-}[d]_{1}  \ar@{-}[dr]^{2} \\ B' & C' }$$
means that the kernel of a $W$-approximation of the indecomposable nonprojective module $A'=e_a J^k$ is $B'$, in case $e_aJ^{w-1}$ is not a summand of $W$ (always corresponding to the arrow with a $1$), and the kernel is $C'$ otherwise (always corresponding to an arrow with a $2$).\newline
So for a general module $(a,k)=e_aJ^k$, not in add($W$), the diagram looks as follows in the first step:
$$\xymatrix@1{  (a,k) \ar@{-}[d]_{1}  \ar@{-}[dr]^{2} \\ (a+k,w-k) & (a+k,w-(k+1)) }$$
We also set $B'=stop$, if $B'$ is a summand of $W$. Dots like $\cdots$ indicate that it is clear how the resolution continues from this point on.

\begin{theorem}
\label{gordimgen}
Let $w \equiv_n 1$ (which is equivalent to $A$ being a symmetric Nakayama algebra). Then $B$ has Gorenstein dimension
$$2\sup \{ u_i \mid u_i=\inf \{b \geq 1 \mid \exists j:\ x_i+b \equiv_n x_j  \} \},$$
which is two times the maximal distance between two special points.
\end{theorem}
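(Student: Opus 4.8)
The plan is to compute the right injective dimension of $B$ by transporting the problem, through the derived equivalence recorded above, to the algebra $C=End_A(W)$ with $W=A\oplus N$ and $N=\Omega^{1}(M)=\bigoplus_{i=1}^{r}e_{x_i}J^{w-1}$; the lemma stated earlier says this derived equivalence preserves both dominant and Gorenstein dimension, so it suffices to compute $injdim(C)$. Since $w\equiv_n1$ we have $e_{x_i}J^{w-1}\cong S_{x_i}$, so $N=\bigoplus_{i=1}^{r}S_{x_i}$ is semisimple. The module $W$ is a CoGen of mod-$A$ (it contains $A$, a generator-cogenerator, since $A$ is selfinjective), so Proposition~\ref{CheKoetheorem} applies: writing $domdim(C)=z+2$,
$$injdim(C)=z+2+W\text{-resdim}\bigl(\tau_{z+1}(W)\oplus D(A)\bigr).$$
By Corollary~\ref{formgendo}, $domdim(C)=domdim(B)=2d$ with $d:=\inf\{s\ge1\mid\exists i,j:\ x_i+s\equiv_nx_j\}=\min_iu_i$, so $z=2d-2$.

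Next I would simplify the right-hand side. As $A$ is symmetric, $D(A)\cong A\in\add(W)$, so $D(A)$ contributes $0$ to the resolution dimension. As $A$ is projective, $\tau_{z+1}(A)=\tau\Omega^{z}(A)=0$, and $\tau$ commutes with direct sums, so $\tau_{z+1}(W)=\bigoplus_{i=1}^{r}\tau_{z+1}(S_{x_i})$. For a symmetric algebra $\tau\cong\Omega^{2}$ on the stable category, hence $\tau_{z+1}(S_a)\cong\Omega^{2d}(S_a)$; and a direct syzygy computation over $A$ gives $\Omega(S_a)=e_aJ$ and then $\Omega^{2}(S_a)=\soc(e_{a+1}A)=S_{a+w}=S_{a+1}$ since $w\equiv_n1$, so $\Omega^{2d}(S_a)\cong S_{a+d}$. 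Since $\tau_{z+1}(S_a)$ is an honest simple module (by the remark before the theorem, $domdim(C)$ being even) stably isomorphic to $S_{a+d}$, it equals $S_{a+d}$. Using that $W\text{-resdim}$ of a direct sum is the maximum of the summands' resolution dimensions, this gives
$$injdim(C)=2d+\max_{1\le i\le r}W\text{-resdim}(S_{x_i+d}).$$

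The core step is to compute $W\text{-resdim}(S_a)$ for a vertex $a$. If $a$ is special, $S_a\in\add(W)$ and the value is $0$. Otherwise $a$ is not special, forcing $n\ge2$ and $w\ge n+1\ge3$; the projective cover $e_aA\twoheadrightarrow S_a$ is already a minimal right $\add(W)$-approximation, because the non-projective summands of $W$ are the $S_{x_j}$ and $\Hom(S_{x_j},S_a)=0$, so $\Omega_W(S_a)=e_aJ$. This module has length $w-1\ge2$, hence is not in $\add(W)$, and case~(1) of the $\add(W)$-approximation lemma applied to it yields $\Omega_W^{2}(S_a)\cong e_{a+1}J^{w-1}\cong S_{a+1}$. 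Iterating, the relative syzygies of $S_a$ are $S_a,\ e_aJ,\ S_{a+1},\ e_{a+1}J,\ S_{a+2},\dots$, and the process stops exactly at the first simple $S_{a+b}$ with $a+b$ special; hence $W\text{-resdim}(S_a)=2\inf\{b\ge1\mid a+b\equiv_nx_j\text{ for some }j\}$. Applied with $a=x_i+d$: if $u_i=d$ then $x_i+d$ is special and the term is $0=2(u_i-d)$; if $u_i>d$ then no special point is at distance $\le d$ from $x_i$, so $\inf\{b\ge1\mid x_i+d+b\equiv_nx_j\}=u_i-d$ and the term is $2(u_i-d)$. Therefore $injdim(C)=2d+2(\max_iu_i-d)=2\max_iu_i$, and by the derived equivalence $injdim(B)=2\max_iu_i=2\sup\{u_i\mid u_i=\inf\{b\ge1\mid\exists j:\ x_i+b\equiv_nx_j\}\}$, which is twice the maximal distance between two special points.

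The main obstacle I anticipate is the bookkeeping in the last paragraph: one must verify that at every stage of the $\add(W)$-resolution the right branch of the approximation lemma is used (never case~(2), since the intermediate modules $e_{a+b}J$ and the simples $S_{a+b}$ with $a+b$ not yet special lie outside $\add(W)$), that the very first step — where $S_a$ is not literally of the form $e_aJ^{y}$ — is handled directly via the projective cover, and that the induction terminates precisely at the nearest special point so that the count matches $u_i$; a smaller point needing care is the identification $\tau_{z+1}(S_a)=S_{a+d}$, which uses both $w\equiv_n1$ and $\tau\cong\Omega^{2}$ for symmetric algebras.
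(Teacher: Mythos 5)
Your proposal is correct and follows essentially the same route as the paper: domdim $=2d$ via \ref{formgendo}, Proposition \ref{CheKoetheorem} with $\tau\cong\Omega^{2}$ and the derived equivalence to $C=\End_A(W)$, then the add($W$)-approximation lemma to see that the relative resolution of each shifted simple alternates $S_{a},e_aJ,S_{a+1},\dots$ and stops at the nearest special point. Your extra bookkeeping (the direct argument for the first approximation step, the check that intermediate relative syzygies are not in $\add(W)$, and the identification $\tau_{z+1}(S_a)=S_{a+d}$) just makes explicit what the paper compresses into its diagram and the phrase ``now the result is clear.''
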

\begin{proof}
By \hyperref[formgendo]{ \ref*{formgendo}}, $B$ has dominant dimension equal to $2\inf \{ s \geq 1 \mid \exists i,j: x_i + s  \equiv_n x_j  \}$, which is equal to two times the smallest distance of two special points. Denote by $d$ the smallest distance between two special points. Using \hyperref[CheKoetheorem]{ \ref*{CheKoetheorem}} and $\tau \cong \Omega^{2}$ (since $A$ is symmetric), the Gorenstein dimension is equal to $2d$+$W$-resdim($\Omega^{2d}(W)$), with $W=\bigoplus\limits_{i=0}^{n-1}{e_i A} \oplus \bigoplus\limits_{i=1}^{r}{e_{x_i} J^{w-1}}$. Note that $\Omega^{2d}(W)=\bigoplus\limits_{i=1}^{r}{e_{x_i+d} J^{w-1}}$ and so we have to calculate $W$-resdim($\Omega^{2d}(W))$. Since the resolution dimension of a direct sum of modules equals the supremum of the resolution dimensions of the indecomposable summands, it is enough to look at a resolution of a single simple module of the form $(x_j+d,w-1)$:
\begin{small}
$$\xymatrix@1{  (x_j+d,w-1) \ar@{-}[d]_{1}  \ar@{-}[dr]_{2} \\ (x_j+d,1) \ar@{-}[d]_{1} & stop\\ (x_j+d+1,w-1) \ar@{-}[d]_{1}  \ar@{-}[dr]_{2} \\ (x_j+d+1,1) \ar@{-}[d]_{1} & stop\\ (x_j+d+2,w-1) \ar@{-}[d]_{1}  \ar@{-}[dr]_{2} \\ \cdots & stop  }$$ 
\end{small}
Considering this diagram, we see now that the resolution finishes exactly when the kernel is of the form $(x_j+d+i,w-1)$ with the smallest $i \geq 0$ such that $e_{x_j+d+i} J^{w-1}$ is a summand of $W$. This takes $2i$ steps. Now the result is clear.
\end{proof}

It follows that the dominant dimension (Gorenstein dimension) of a nonsymmetric gendo-symmetric Nakayama algebra $A$ can be calculated purely graph theoretically: \newline It is two times the minimal (two times the maximal) distance of special points in the quiver of the symmetric Nakayama algebra $eAe$, when $e$ is a primitive idempotent, such that $eA$ is a minimal faithful projective-injective module of $A$. \newline
Combining these results, we get the following geometric characterisation when the dominant dimension equals the Gorenstein dimension for a non-selfinjective gendo-symmetric Nakayama algebra: 
\begin{corollary}
In the situation of the above theorem, injdim$(B)$=domdim$(B)$ iff all the special points in $M$ have the same distance from one another.
\end{corollary}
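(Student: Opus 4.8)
The plan is to reduce the claim to a purely combinatorial statement about the positions of the special points $x_1,\dots,x_r$ on the cycle $\mathbb{Z}/n$, and then dispose of that statement by a short pigeonhole argument. First I would invoke Corollary~\ref{formgendo} and Theorem~\ref{gordimgen}: writing $u_i = \inf\{b\ge 1 \mid \exists j:\ x_i + b \equiv_n x_j\}$ for the directed distance from $x_i$ to the next special point ahead of it, and noting that $\inf\{s\ge 1 \mid \exists i,j:\ x_i+s\equiv_n x_j\}=\min_i u_i$, these results give $\mathrm{domdim}(B) = 2\min_i u_i$ and $\mathrm{injdim}(B) = 2\max_i u_i$. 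Hence $\mathrm{injdim}(B) = \mathrm{domdim}(B)$ is equivalent to $\min_i u_i = \max_i u_i$, i.e.\ to all the $u_i$ being equal to one common value $c$.

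Next I would show that the $u_i$ all coincide if and only if the special points are equally spaced around the quiver of $eAe$, which is the precise meaning of ``all special points have the same distance from one another''. One direction is trivial: if the gaps between consecutive special points are all equal, then the nearest special point ahead of any $x_i$ is its immediate successor, so every $u_i$ equals that common gap. For the converse, suppose $u_i = c$ for all $i$. Then $x_i + c$ is itself a special point for every $i$ (it is the nearest one ahead of $x_i$), so the set $T := \{x_1,\dots,x_r\}$ is stable under translation by $c$; since $T$ is finite and translation by $c$ is injective on $\mathbb{Z}/n$, in fact $T + c = T$, so $T$ is a union of cosets of the subgroup $\langle c\rangle = \langle \gcd(c,n)\rangle$ of $\mathbb{Z}/n$.

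The last step, and the only one requiring a little care, is to argue that $T$ cannot be the union of two or more such cosets. If it were, then among any $\gcd(c,n)$ consecutive vertices the set $T$ would contain at least two, which would therefore be at directed distance at most $\gcd(c,n) - 1 < c$ from one another, contradicting $u_i = c$ for the first of the two. Hence $T$ is a single coset $a + \langle\gcd(c,n)\rangle$, i.e.\ an arithmetic progression with common difference $\gcd(c,n)$, which is exactly an equally spaced configuration (and then necessarily $c = \gcd(c,n)\mid n$); this finishes the proof. I expect this coset-counting step to be the main (though modest) obstacle: it is the only place where the full strength of ``$u_i = c$ for \emph{every} $i$'' is genuinely used, and it is what forces the configuration to be a single progression rather than merely translation-invariant.
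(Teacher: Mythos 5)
Your proposal is correct and takes essentially the same route as the paper: it combines \ref{formgendo} and \ref{gordimgen} to get $\mathrm{domdim}(B)=2\min_i u_i$ and $\mathrm{injdim}(B)=2\max_i u_i$, so equality holds precisely when all the $u_i$ coincide. The closing coset-counting argument is superfluous, since $u_i$ is by definition the gap from $x_i$ to its immediate successor among the special points in the cyclic order, so ``all $u_i$ equal to $c$'' is already literally the equal-spacing condition (and summing the gaps around the circle gives $rc=n$, hence $c\mid n$).
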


\end{document}